\numberwithin{equation}{section}
\title{A Bounded Regret Strategy for Linear Dynamics with Unknown Control}
\author{Jacob Carruth}
\date{November 2023}
\newtheorem{thm}{Theorem}
\newtheorem{lem}{Lemma}
\newtheorem{rmk}{Remark}
\newcommand{\R}{\mathbb{R}}
\newcommand{\prob}{\text{Prob}}
\newcommand{\eprob}{\emph{Prob}}
\newcommand{\E}{\text{E}}
\newcommand{\var}{\text{Var}}
\newcommand{\cF}{\mathcal{F}}
\newcommand{\cost}{\textsc{Cost}}
\newcommand{\ecost}{\textsc{ECost}}
\newcommand{\br}{\text{BR}}
\newcommand{\ebr}{\emph{BR}}
\newcommand{\titau}{\tilde{\tau}}
\newcommand{\tit}{\tilde{t}}
\newcommand{\httau}{\hat{\tau}}
\newcommand{\htt}{\hat{t}}
\newcommand{\cE}{\mathcal{E}}
\newcommand{\op}{\text{opt}}
\newcommand{\eop}{\emph{opt}}
\newcommand{\ecg}{\emph{CG}}
\newcommand{\ii}{\text{ii}}
\newcommand{\iii}{\text{iii}}
\newcommand{\I}{\text{i}}
\newcommand{\mreg}{\text{MReg}}
\begin{document}

\maketitle

\begin{abstract}
    We consider a simple linear control problem in which a single parameter $b$, describing the effect of the control variable, is unknown and must be learned. We work in the setting of agnostic control: we allow $b$ to be any real number and we do not assume that we have a prior belief about $b$. For any fixed time horizon, we produce a strategy whose expected cost is within a constant factor of the best possible.
\end{abstract}

\section{Introduction}

In this paper, we consider a simple linear control problem with a single unknown parameter $b$. We do not assume that we are given a prior belief about $b$, and we seek to bound a quantity called the regret, which compares our expected cost to that of an opponent with perfect knowledge of $b$. 

This falls within the purview of adaptive control. In the adaptive control literature, one typically seeks a control strategy that minimizes the regret as the time horizon tends to infinity. Here, we explore a variant of adaptive control called agnostic control, in which one seeks to minimize the regret over a fixed, potentially short, time horizon; see also \cite{almostoptimal2023, carruth2022controlling, boundeda, fefferman2021optimal, gurevich2022optimal}.

Let $q(t)$ denote the position of a particle at time $t$. Given some starting position $q_0 \in \R$, the particle evolves according to the stochastic ODE
\begin{equation}\label{eq: int 1}
dq(t) = (aq(t) + bu(t))dt + dW(t), \qquad q(0) = q_0.
\end{equation}
Here $a$ and $b$ are real numbers, $W(t)$ is a standard Brownian motion (here ``standard'' means that $W(0)=0$ and the variance of $W(t)$ is $t$), and $u$ is the control variable. We treat the parameter $a$ as a fixed, known quantity, but we treat $b$ as an unknown parameter about which we have no prior belief. We choose $u(t)$ depending on the history up to time $t$, i.e., $(q(\tau))_{0\le\tau\le t}$. The control $u(t)$ is allowed to depend on $a$, but not on the unknown parameter $b$. 

More precisely, a \emph{strategy} is a choice of $u(t)$ for every $t$ and every history $(q(\tau))_{0 \le \tau \le t}$, subject to certain technical conditions that allow us to make sense of solutions to \eqref{eq: int 1} for arbitrary $b \in \R$ (a precise definition is given in Section \ref{sec: strategies}). For a given strategy $\sigma$, we let $q^\sigma(t,b)$ and $u^\sigma(t,b)$ denote the corresponding solutions to \eqref{eq: int 1} (note that these are random variables). Fix a time horizon $T>0$. We define the cost of $\sigma$ to be the random variable
\[
\cost(\sigma,b) = \int_0^T \big((q^\sigma(t,b))^2 + (u^\sigma(t,b))^2\big)dt,
\]
and we write $\ecost(\sigma,b)$ for the expected value of $\cost(\sigma,b)$. 

For a given value of $b$, there exists a strategy $\sigma_\op(b)$ that minimizes the quantity $\ecost(\sigma,b)$ over all strategies $\sigma$ (this is well-known, see, e.g., \cite{astrom}). We write 
\[
\ecost_\op(b) = \ecost(\sigma_\op(b),b).
\]
It is helpful to think of $\sigma_\op(b)$ as the strategy of an opponent who knows the value of $b$ and plays optimally---this leads to the notion of \emph{regret}.

We define the (multiplicative) \emph{regret}\footnote{We will use the multiplicative regret, often referred to as the competitive ratio. There are other notions of regret; see \cite{almostoptimal2023, boundeda} for a discussion of two others.} of a strategy $\sigma$ by 
\[
\mreg(\sigma,b) = \frac{\ecost(\sigma,b)}{\ecost_\op(b)},
\]
and the \emph{worst-case regret} of $\sigma$ by
\[
\mreg^*(\sigma) = \sup_{b \in \R} \mreg(\sigma, b).
\]
We seek a strategy that minimizes the \emph{worst-case regret}, and we refer to such a strategy as an \emph{optimal agnostic strategy}.

In \cite{almostoptimal2023, boundeda}, we produced almost optimal agnostic strategies for the dynamics \eqref{eq: int 1} in the case in which the parameter $a$ is unknown and $b$ is known (for simplicity, in those papers we took $b=1$). The problem considered here, in which $b$ is unknown, appears to be more difficult. This is because it has the following feature: If we set $u(t) = (\Delta t)^{-1/2}$ for a very short time increment $\Delta t$, then we learn a lot about the parameter $b$ and only incur a finite cost. Therefore, we may want to take $u = \pm \infty$; this had better be done carefully.

In this paper, we tackle the more modest problem of exhibiting a strategy with \emph{bounded} multiplicative regret. Specifically, we prove the following theorem

\begin{thm}\label{thm: main}
    Fix $|q_0| \ge 1$ and $T>0$ as above. Then there exists a constant $C>0$ such that for any $a \in \R$ the strategy $\ebr$ defined in Sections \ref{sec: conventions}--\ref{sec: neg} satisfies
    \[
    \ecost(\ebr,b) \le C\cdot \ecost_\eop(b)\;\text{for any}\; b \in \R.
    \]
\end{thm}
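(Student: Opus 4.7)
My plan is to construct $\ebr$ as a two-phase strategy — a short adaptive learning phase, followed by an LQR exploitation phase — and to prove bounded multiplicative regret by analyzing $b$ in three regimes: very small $|b|$, moderate $|b|$, and large $|b|$.

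First I would record the structure of $\ecost_\eop(b)$ by the standard Riccati analysis. Writing the value function as $V(t,q) = P(t;b)\, q^2 + r(t;b)$ with $P$ solving $-\dot{P} = 1 + 2aP - b^2 P^2$, $P(T)=0$, one obtains $\ecost_\eop(b) = P(0;b)\, q_0^2 + \int_0^T P(s;b)\, ds$. A direct phase-plane argument shows this quantity is continuous in $b$, uniformly bounded below by a positive constant depending only on $a$, $T$, and $|q_0|$, and (using $|q_0| \ge 1$ and $T$ fixed) satisfies $\ecost_\eop(b) \asymp q_0^2 / |b|$ as $|b| \to \infty$. These asymptotics dictate that $\br$ must spend total expected cost of order $1/|b|$ for large $|b|$, and only of order $1$ for small $|b|$.

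The learning phase uses short intervals of length $\delta$ on which a constant test control $u \equiv M_k$ is applied, with magnitudes $M_k$ ramped up geometrically in $k$. On each interval we compute a linear estimator for $b$ from the observed increment of $q$ minus the known $aq$-drift, and we terminate at the first $k$ at which the signal exceeds a fixed multiple of its noise standard deviation $\sim \sqrt{\delta}$. A short calculation shows that with the magnitudes chosen appropriately, termination occurs with $M_k^2 \delta = O(|b|^{-2})$ when $|b| \gg 1$, so the learning cost is much smaller than $\ecost_\eop(b)$ in this regime; when $|b|$ is of order $1$, the learning cost is merely bounded, which is acceptable since $\ecost_\eop(b)$ is itself bounded below. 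After termination, $\br$ plays the LQR-optimal feedback $u(t) = -\hat{b}\, P(t;\hat{b})\, q(t)$ for the produced estimate $\hat{b}$ on the remaining interval; if the learning phase never terminates (very small $|b|$), $\br$ defaults to $u \equiv 0$.

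The regret bound then follows from (i) a Lipschitz/stability estimate for the Riccati solution, showing that LQR-optimal feedback for $\hat{b}$ incurs expected cost within a constant factor of $\ecost_\eop(b)$ provided the ratio $\hat{b}/b$ is bounded above and below by positive constants, and (ii) a lower-bound estimate showing that for very small $|b|$, $\ecost_\eop(b)$ is comparable to the cost of the zero-control dynamics. The main obstacle I anticipate is the \emph{sign} of $b$: if the learning phase mis-identifies $\operatorname{sgn}(b)$, the exploitation phase applies anti-stabilizing feedback and $\E[q(t)^2]$ can grow exponentially, potentially dominating $\ecost_\eop(b)$ by an arbitrarily large factor. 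This is the purpose of Section~\ref{sec: neg}. I would handle it by calibrating the termination threshold so that sign errors have Gaussian-tail-small probability in the signal-to-noise ratio, and by bounding the conditional cost on the sign-error event using explicit moment estimates for the wrong-sign dynamics, so that the product of probability and conditional cost is dominated by $\ecost_\eop(b)$ uniformly in $b$.
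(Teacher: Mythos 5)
Your overall architecture (escalating test controls, then exploit; default to $u\equiv 0$ if nothing is detected) is in the same family as the paper's strategy, but the sketch is missing the features that make the theorem true with a constant independent of $a$, and as written the error analysis would fail. First, your benchmark for $\ecost_\op(b)$ is incomplete: for $a$ large, Lemma \ref{lem: j0} gives three regimes, $e^{2aT}/a$ for $|b|\le ae^{-aT}$, $a/b^{2}$ for $ae^{-aT}\le |b|\le a$, and $1/|b|$ for $|b|\ge a$. The intermediate $a/b^{2}$ regime (absent from your asymptotics, which in effect treat $a$ as fixed) is exactly the target that forces the learning phase to resolve the order of magnitude of $b$ at total cost $O(e^{2\nu}/a)$ when $|b|\approx ae^{-\nu}$. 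Consequently the interval length $\delta$, the thresholds, and the number of ramp stages must all scale with $a$: the paper takes $\tau$ small depending on $a$, runs $\nu^{*}=\lfloor aT\rfloor$ testing epochs, and uses state-proportional test controls $u=e^{\nu}q(t_{\nu})(a\tau)^{-1/2}$ precisely so that the hitting-time Lemma \ref{lem: BKPL} can separate the control signal from the drift $aq$. A scheme with $a$-independent $\delta$ and open-loop inputs $M_{k}$ cannot yield a regret constant uniform in $a$, which is what the theorem asserts.

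Second, your error events are not controlled by the ``fixed multiple of the noise standard deviation'' rule. With a fixed SNR threshold each stage has a constant false-detection probability and, at termination, a constant probability of mis-signing $\hat b$; with on the order of $aT$ stages a premature detection is essentially certain, and your exploitation phase is irrevocable, so on a wrong-sign (or badly overestimated $\hat b$) event the closed loop grows at rate roughly $a+|b|$ for the remaining horizon, with conditional cost of order $e^{2(a+|b|)T}$. This dwarfs $\ecost_\op(b)$ unless the error probability is exponentially small in $(a+|b|)T$, which a fixed threshold cannot deliver — and enlarging the threshold to deliver it destroys the learning-cost estimate $M_{k}^{2}\delta=O(|b|^{-2})$ you asserted. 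The paper avoids this trap in two ways: its detection threshold is $|q(t_{\nu})|(a\tau)^{1/2}$ against noise of size $\tau^{1/2}$, so false positives and negatives carry probabilities $\exp(-c|q|^{2}a)$ or $\exp(-c|q|^{2}e^{\mu}|b|)$ (the estimates of Section \ref{sec: nu}); and its control phase is monitored and self-correcting (Subepochs $\nu.\text{ii}$ and $\nu.\text{iii}$ flip the sign and hand back to testing whenever $|q|$ doubles), with repeated doublings tracked through the events $\cE_{\nu,m}$ and the factors $4^{m}$. Without either $a$-dependent, state-proportional thresholds of this kind or a correction mechanism after termination, the product of error probability and conditional cost in your plan is not dominated by $\ecost_\op(b)$ uniformly in $a$ and $b$, so the proposal has a genuine gap.
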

\noindent Clearly, Theorem \ref{thm: main} implies that
\[
\mreg^*(\br) \le C.
\]
We emphasize that the constant $C$ in Theorem \ref{thm: main} depends on $q_0$ and $T$ but is independent of the parameter $a$.

We note that our assumption that $|q_0| \ge 1$ in Theorem \ref{thm: main} can be replaced by $|q_0| \ge q_{\text{small}}$ for any $q_{\text{small}}>0$. Unfortunately, however, the resulting constant $C$ in Theorem \ref{thm: main} then tends to infinity as $q_{\text{small}}$ tends to zero. We think that with a bit of care this can be avoided, and we are currently working on producing a bounded regret strategy for small $q_0$.

We now give an overview of some of the ideas behind the strategy $\br$. We will assume that $q_0 \ge 1$. By symmetry, this also defines the strategy for $q_0 \le -1$.

We begin with a high-level overview of our strategy. Our strategy is different in each of the cases $a > A$, $|a| \le A$, and $a < - A$, where $A>1$ is large real number. 

First, we describe our strategy when $a < - A$. In this case, the position $q$ of the particle will decay to zero rapidly if we just set $u = 0$. This will achieve bounded regret unless $|b| \gg |a|$, in which case the system will decay to zero much faster if we set $u = \pm q$ (depending on the sign of $b$). Therefore, our strategy begins by immediately testing whether $|b|$ is huge. If we find that it is, then we set $u = \pm q$; if not, we set $u = 0$.

Next, we describe our strategy when $|a| \le A$. In this case, we incur a bounded expected cost by simply setting $u=0$. As in the previous case, this achieves bounded regret unless $|b|$ is huge, in which case we should again set $u = \pm q$. Therefore, when $a$ is bounded, our strategy is essentially the same as in the case in which $a$ is large and negative.

Last, suppose that $a>A$. As in the previous two cases, we'd like to set $u = \pm q$ if $|b| \gg |a|$. On the other hand, if $|b|$ is extremely small, then we stand to gain little by controlling the system and we are happy setting $u=0$. For $|b|$ neither huge nor tiny, we can achieve bounded regret by exercising a control whose order of magnitude is $|u| \approx \pm (a/b^2)|q|$ (again, the sign here is determined by the sign of $b$).

We thus proceed as follows. We begin by testing whether $|b| \gg a$. If we detect that it is, then we set $u = \pm q$ (depending on the sign of $b$). If do not detect $|b| \gg a$, then we attempt to learn the order of magnitude of $|b|$. We accomplish this via a series of \emph{Testing Epochs}, in which we exercise increasingly large control over extremely short successive time intervals. If during one of these Testing Epochs we register a significant change in $q$, then we have a good guess for the order of magnitude of $b$ and we control the system by setting $u \approx (a/|b|)|q|$. If, on the other hand, we undergo many Testing Epochs without registering a significant change in $q$, we conclude that $|b|$ is extremely small and we set $u = 0$ for the remainder of the game. We now discuss the Testing Epochs in a bit more detail.

We continue to assume that $a$ is large and positive. Suppose that we have determined that $|b| \gg a$; our goal now is to determine the order of magnitude of $|b|$. We enter a Testing Phase, consisting of a series of Testing Epochs indexed by an integer $\nu\ge 1$. The goal of each Testing Epoch is to determine whether $|b| \approx e^{-\nu}a$. Only the first of these Testing Epochs is guaranteed to occur.

Let $\tau>0$ be a small real number depending on $a$. During Testing Epoch 1, we set $u \approx e (a\tau)^{-1/2}|q|$. If we register a change in $q$ on the order of magnitude of $(a\tau)^{1/2}$ within time $\tau$, then with high probability $|b| \approx e^{-1} a$. Since we have a good guess for $|b|$, we exit the Testing Phase and control the system by setting $u \approx \pm (a/|b|) |q| \approx \pm e^\nu |q|$. If, on the other hand, we fail to register a sufficiently large change in $q$ within time $\tau$, then we assume that $|b|\ll e^{-1}a$ and we enter Testing Epoch 2.

If we enter Testing Epoch 2, then we set $u \approx e^2 (a\tau)^{-1/2}|q|$. As above, if we register a change in $q$ on the order of magnitude of $(a\tau)^{1/2}$ within time $\tau$, then with high probability $|b| \approx e^{-2}a$. We then exit the Testing Phase and control the system by setting $u \approx \pm e^2 |q|$. If, on the other hand, we fail to detect a significant change in $q$ within time $\tau$, then we enter Testing Epoch 3.

In general, if we enter Testing Epoch $\nu$, then we set $u \approx e^\nu (a\tau)^{-1/2}|q|$. If we register a change in $q$ on the order of magnitude of $(a\tau)^{1/2}$ within time $\tau$, then with high probability $|b| \approx e^{-\nu}a$. We then exit the Testing Phase and set $u \approx \pm e^\nu |q|$. If, on the other hand, we fail to detect a significant change in $q$ within time $\tau$, then we enter Testing Epoch $(\nu+1)$.

If we pass through $\nu^*$ Testing Epochs for some large, positive integer $\nu^*$ (depending on $a$) without detecting a significant change in $q$, then we conclude that $|b| \ll e^{-\nu^*}a$. We then achieve bounded regret by setting $u=0$ for the remainder of the game.

This concludes our high-level overview of the strategy $\br$. In order to keep the discussion simple, we've left out some details, e.g., we haven't said how our strategy handles rare events or how we determine the sign of $b$. For details, see Sections \ref{sec: def}--\ref{sec: neg}.

We now outline the contents of the remainder of this paper. In Section \ref{sec: prelim}, we prove some preliminary lemmas about stochastic processes. In Section \ref{sec: strategies}, we establish some basic properties of strategies and of the function $\ecost_\op(b)$. In Sections \ref{sec: conventions}--\ref{sec: neg}, we prove Theorem \ref{thm: main}.

We note here that adaptive control theory is an active field of research, and we refer the interested reader to the literature surveys in our papers \cite{almostoptimal2023, boundeda}.

The author would like to thank the Air Force Office of Scientific Research, and specifically Frederick Leve, for support via AFOSR grant FA9550-19-1-0005. He would also like to thank Maximilian Eggl and Clarence Rowley for helpful discussions, and Charles Fefferman for helpful discussions and for carefully reading an earlier draft of this paper.

\section{Preliminaries}\label{sec: prelim}

Throughout this paper we let $W(t)$ denote a copy of standard Brownian motion and $(\Omega, \cF, \prob)$ denote the corresponding probability space. We write $\E[X]$ and $\var[X]$ to denote the expectation and variance (respectively) of a random variable $X$ with respect to $\prob$. Here, ``standard'' Brownian motion means that $W(0) = 0$ and $\var[W(t)]=t$. For $t\ge 0$, we let $\cF_t$ denote the sigma-algebra determined by the history of $W(s)$ from time $s=0$ until time $s=t$.

Throughout this section we write $C$, $C'$, $c$, $c'$, etc.\ to denote positive absolute constants. We write $C_X$, $C_X'$, $c_X$, etc.\ to denote positive constants depending on a quantity $X$. The value of these constants will change from line to line.

The remainder of this section is devoted to proving two lemmas about the hitting times of certain simple stochastic ODE's. We will make use of the following remark.

\begin{rmk}\label{rmk: xt}
    Suppose $q(t)$ is governed by
    \begin{equation}\label{eq: prelim 1}
    dq(t) = (\alpha q(t) + \beta) dt  + dW(t), \qquad q(0) = Q_0
\end{equation}
for real numbers $\alpha$, $\beta$, and $Q_0 \ne 0$. Define a stochastic process
\begin{equation}\label{eq: prelim 6}
    X(t) = e^{-\alpha t}q(t) - Q_0 = \beta \bigg(\frac{1-e^{-\alpha t}}{\alpha}\bigg) + \int_0^t e^{-\alpha s} dW(s)
\end{equation}
and observe that, for any $t \ge 0$, $X(t)$ is a normal random variable with
\begin{equation}\label{eq: prelim 7}
    \emph{E}[X(t)] = \beta \bigg(\frac{1-e^{-\alpha t}}{\alpha}\bigg), \qquad \emph{Var}[X(t)] = \bigg( \frac{1-e^{-2\alpha t}}{2\alpha}\bigg).
\end{equation}
For the remainder of this paper, we adopt the convention that $(1-e^{-\alpha t})/\alpha = t$ when $\alpha = 0$, so that \eqref{eq: prelim 6} and \eqref{eq: prelim 7} are well-defined for any $\alpha \in\R$.

Observe that the process $t \mapsto (X(t) - \emph{E}[X(t)])$ satisfies the reflection principle, i.e., for any $M>0$ we have
\begin{equation}\label{eq: prelim 4}
\eprob\Big[ \sup_{0 \le s \le t}\{ X(s) - \emph{E}[X(s)]\} \ge M\Big] = 2 \cdot\eprob[ (X(t) - \emph{E}[X(t)]) \ge M].
\end{equation}
The proof of \eqref{eq: prelim 4} is easily adapted from the proof of the reflection principle for Brownian motion; see \cite{almostoptimal2023} for details.
\end{rmk}

We are now ready to state and prove the first of our preliminary lemmas. 

\begin{lem}\label{lem: BKPL}
For any sufficiently large positive integers $m_0$, $n_0$, the following holds. Let $Q_0 \ne 0$, $\alpha >0$, and $\beta$ be real numbers, and let $\tau>0$ be a sufficiently small real number depending on $\alpha$. Suppose $q(t)$ is governed by
\begin{equation}\label{eq: prelim 8}
dq(t) = (\alpha q(t) + \beta Q_0 (\alpha \tau)^{-1/2})dt + dW(t), \qquad q(0) = Q_0.
\end{equation}
Define a stopping time $\tau_*$ to be equal to the first time $t \in (0,\tau)$ for which
\[
|q(t) - Q_0| \ge |Q_0| (\alpha \tau)^{1/2}
\]
if such a time exists and equal to $\tau$ if no such time exists. Then the following hold.
    \begin{enumerate}[label={\emph{(\Alph*)}}]
        \item $\eprob[\tau_* = \tau] \le C \exp(-c Q_0^2 \beta^2/\alpha)$ for any $|\beta| \ge \alpha  e^{n_0}.$
        \item $\eprob[ \tau_* < \tau] \le C \exp(-cQ_0^2 \alpha)$ for any $|\beta| \le \alpha  e^{-m_0}$.
    \end{enumerate}
\end{lem}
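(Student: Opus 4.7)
The plan is to pass to the transformed process $X(t) = e^{-\alpha t}q(t) - Q_0$ of Remark \ref{rmk: xt}, with the identification that the ``$\beta$'' appearing in \eqref{eq: prelim 1} equals $\beta Q_0(\alpha\tau)^{-1/2}$. Thus
\[
\E[X(t)] = \beta Q_0 (\alpha\tau)^{-1/2}\cdot \frac{1-e^{-\alpha t}}{\alpha}, \qquad \var[X(t)] = \frac{1-e^{-2\alpha t}}{2\alpha}.
\]
I would take $\tau$ small enough (depending on $\alpha$) that $\alpha\tau \le 1$, so that on $[0,\tau]$ both $(1-e^{-\alpha t})/\alpha$ and $(1-e^{-2\alpha t})/(2\alpha)$ are comparable to $t$ up to universal constants, and $|e^{\alpha t}-1| \le 2\alpha\tau$. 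Using $q(t) - Q_0 = e^{\alpha t}X(t) + Q_0(e^{\alpha t}-1)$, the drift contribution $|Q_0(e^{\alpha t}-1)| \le 2|Q_0|\alpha\tau$ is negligible relative to the threshold $|Q_0|(\alpha\tau)^{1/2}$ for $\alpha\tau$ sufficiently small, so the stopping criterion $|q(t)-Q_0| \ge |Q_0|(\alpha\tau)^{1/2}$ is, up to a factor of $2$ in the constant, equivalent to $|X(t)| \ge \tfrac12|Q_0|(\alpha\tau)^{1/2}$.

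For part (A), the event $\{\tau_* = \tau\}$ in particular forces $|q(\tau)-Q_0| < |Q_0|(\alpha\tau)^{1/2}$, hence $|X(\tau)| \le 2|Q_0|(\alpha\tau)^{1/2}$. By the computation above,
\[
|\E[X(\tau)]| \;\asymp\; |\beta Q_0|\,\tau\,(\alpha\tau)^{-1/2} \;=\; |Q_0|\cdot \frac{|\beta|}{\alpha}\cdot(\alpha\tau)^{1/2},
\]
and the assumption $|\beta| \ge \alpha e^{n_0}$ makes this at least $\tfrac12 e^{n_0}|Q_0|(\alpha\tau)^{1/2}$, which for $n_0$ chosen large dominates $2|Q_0|(\alpha\tau)^{1/2}$ by a factor of $\asymp |\beta|/\alpha$. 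Thus $\{\tau_*=\tau\}$ forces $|X(\tau)-\E[X(\tau)]| \gtrsim |Q_0|(|\beta|/\alpha)(\alpha\tau)^{1/2}$. Since $X(\tau)-\E[X(\tau)]$ is mean-zero Gaussian with variance $\le \tau$, a standard Gaussian tail bound yields probability at most $C\exp(-cQ_0^2\beta^2/\alpha)$.

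For part (B), the event $\{\tau_* < \tau\}$ is $\sup_{0 \le t \le \tau}|q(t)-Q_0| \ge |Q_0|(\alpha\tau)^{1/2}$. Under $|\beta|\le \alpha e^{-m_0}$ we have for $t\le\tau$
\[
|\E[X(t)]| \;\le\; |\beta Q_0|(\alpha\tau)^{-1/2}\cdot t \;\le\; e^{-m_0}|Q_0|(\alpha\tau)^{1/2},
\]
which, combined with the negligible $Q_0(e^{\alpha t}-1)$ term, shows (for $m_0$ large and $\alpha\tau$ small) that the event is contained in $\{\sup_{0 \le t \le \tau}|X(t)-\E[X(t)]| \ge c|Q_0|(\alpha\tau)^{1/2}\}$. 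Applying the reflection principle \eqref{eq: prelim 4} to each of $X(t)-\E[X(t)]$ and $-(X(t)-\E[X(t)])$ reduces this to bounding $\prob[|X(\tau)-\E[X(\tau)]| \ge c|Q_0|(\alpha\tau)^{1/2}]$, and the Gaussian tail bound with variance $\le \tau$ gives the claimed $C\exp(-cQ_0^2\alpha)$.

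The main obstacle is not conceptual but bookkeeping: one must choose $m_0, n_0$ large and the smallness of $\alpha\tau$ so that the lower-order contributions from $Q_0(e^{\alpha t}-1)$, from the Taylor approximation of $(1-e^{-\alpha t})/\alpha$, and from the mean $\E[X]$ in (B) or its excess beyond the threshold in (A), remain strictly dominated by the leading-order quantity entering the Gaussian tail. Provided those margins are maintained, the claimed exponents $cQ_0^2\beta^2/\alpha$ and $cQ_0^2\alpha$ follow directly.
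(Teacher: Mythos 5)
Your proposal is correct and follows essentially the same route as the paper: pass to $X(t)=e^{-\alpha t}q(t)-Q_0$, compare the mean $\E[X(t)]\asymp \beta Q_0 t(\alpha\tau)^{-1/2}$ against the threshold $|Q_0|(\alpha\tau)^{1/2}$, and finish with Gaussian tails for (A) and the reflection principle \eqref{eq: prelim 4} for (B). The only difference is cosmetic: you handle both signs of $\beta$ (and both exit directions) at once via absolute values, whereas the paper runs the four one-sided cases separately.
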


\begin{proof}
    Without loss of generality, we assume that $Q_0>0$. Motivated by Remark \ref{rmk: xt}, we define
\[
X(t) = e^{-\alpha t}q(t) - Q_0;
\]
note that
\begin{equation}\label{eq: prelim 9}
\E[X(t)] = \bigg( \frac{\beta Q_0} {(\alpha \tau)^{1/2}}\bigg)\bigg(\frac{1-e^{-\alpha t}}{\alpha}\bigg)
\end{equation}
and
\begin{equation}\label{eq: prelim 10}
    \var[X(t)] = \bigg(\frac{1-e^{-2\alpha t}}{2\alpha }\bigg).
\end{equation}
Provided $\tau$ is sufficiently small depending on $\alpha$, we then have for any $t \in [0,\tau]$ that
\begin{align}
     &|\E[X(t)]| \approx \bigg( \frac{|\beta| Q_0  t}{(\alpha \tau)^{1/2}}\bigg),\label{eq: prelim 13}\\
    & \var[X(t)] \approx  t.\label{eq: prelim 8.5}
\end{align}

Let $n_0 \ge 1$ be a sufficiently large integer and assume until further notice that
\begin{equation}\label{eq: prelim 14}
    \beta \ge \alpha  e^{n_0}.
\end{equation}

Suppose that $q(\tau) < Q_0(1+(\alpha \tau)^{1/2})$. This implies that $X(\tau) < Q_0 (\alpha \tau)^{1/2}$; combining this with \eqref{eq: prelim 13} and \eqref{eq: prelim 14} (and observing that $\E[X(\tau)]>0$ when $\beta>0$), we get
\begin{equation*}
    (X(\tau) - \E[X(\tau)]) < \frac{Q_0 \tau^{1/2} \beta}{\alpha^{1/2}}(e^{-n_0} - c).
\end{equation*}
Taking $n_0$ sufficiently large ensures that
\begin{equation*}
    (X(\tau) - \E[X(\tau)]) < - c \bigg( \frac{Q_0 \tau^{1/2} \beta}{\alpha^{1/2}}\bigg).
\end{equation*}
Note that the event $(\tau_* = \tau)$ implies that $q(t) < Q_0(1+(\alpha\tau)^{1/2})$ for all $t \in (0,\tau)$. We have therefore shown that
\begin{equation}\label{eq: prelim 15}
    \prob[ \tau_* = \tau ] \le \prob\big[(X(\tau) - \E[X(\tau)]) < - c Q_0 \tau^{1/2} \beta / \alpha^{1/2}\big].
\end{equation}
Since $X(\tau)$ is a normal random variable, we use \eqref{eq: prelim 8.5} and \eqref{eq: prelim 15} to deduce that
\begin{equation}\label{eq: prelim 16}
    \prob[\tau_* = \tau] \le C \exp(-c Q_0^2 \beta^2 /\alpha)\;\text{for any}\; \beta \ge \alpha  e^{n_0}.
\end{equation}

We now assume that
\begin{equation}\label{eq: prelim 17}
    \beta \le - \alpha  e^{n_0}.
\end{equation}
If $q(\tau) > Q_0(1- (\alpha\tau)^{1/2})$, then
\begin{align*}
X(\tau) >& (1-\alpha\tau)Q_0(1-(\alpha\tau)^{1/2}) - Q_0\\
>& - c(\alpha\tau)^{1/2}Q_0
\end{align*}
provided $\tau$ is sufficiently small depending on $\alpha$. Combining this with \eqref{eq: prelim 17} and \eqref{eq: prelim 13} (and observing that $\E[X(t)] < 0$ when $\beta<0$), we get
\[
(X(\tau) - \E[X(\tau)]) > \frac{Q_0\tau^{1/2}|\beta|}{\alpha^{1/2}}( c- c'e^{-n_0}).
\]
Taking $n_0$ sufficiently large then ensures that
\[
(X(\tau) - \E[X(\tau)] ) > c \frac{Q_0 \tau^{1/2}|\beta| }{\alpha^{1/2}}.
\]
Proceeding as in the proof of \eqref{eq: prelim 16}, i.e., observing that the event $(\tau_* = \tau)$ implies $q(\tau) > Q_0(1- (\alpha\tau)^{1/2})$ and using \eqref{eq: prelim 8.5} along with the fact that $X(\tau)$ is a normal random variable, we deduce that
\begin{equation}\label{eq: prelim 18}
    \prob[\tau_* = \tau] \le C \exp(-c Q_0^2 \beta^2/\alpha) \;\text{for any}\; \beta \le - \alpha  e^{n_0}.
\end{equation}
Combining \eqref{eq: prelim 16} and \eqref{eq: prelim 18} proves Part (A) of the Lemma.

Until further notice, assume that $\beta \ge 0$. Observe that if $q(t) \le Q_0 (1- (\alpha\tau)^{1/2})$ for some $t \in [0,\tau]$, then $X(t) \le -  (\alpha\tau)^{1/2}Q_0$. Since $\E[X(t)] \ge 0$ (see \eqref{eq: prelim 9}), we therefore have
\begin{multline*}
\prob[\exists t \in [0,\tau] : q(t) \le Q_0 (1- (\alpha\tau)^{1/2})]\\ \le \prob[ \exists t \in [0,\tau] : (X(t) - \E[X(t)]) \le -  (\alpha\tau)^{1/2} Q_0].
\end{multline*}
By the reflection principle for $(X(t) - \E[X(t)])$, \eqref{eq: prelim 8.5}, and the fact that $X(\tau)$ is a normal random variable, we have
\begin{equation}\label{eq: prelim 11}
\prob[\exists t \in [0, \tau] : q(t) \le Q_0 (1-(\alpha\tau)^{1/2})] \le C\exp(-c Q_0^2 \alpha)\;\text{for any}\; \beta \ge 0.
\end{equation}

Now assume that $\beta < 0$. If $q(t) \ge Q_0(1+(\alpha\tau)^{1/2})$, for some $t \in [0, \tau]$, then
\begin{align*}
X(t) \ge& (1-\alpha t)Q_0(1+(\alpha\tau)^{1/2}) - Q_0\\
\ge& c (\alpha \tau)^{1/2}Q_0
\end{align*}
provided $\tau$ is sufficiently small depending on $\alpha$. By \eqref{eq: prelim 9}, $\E[X(t)] \le 0$ when $ \beta<0$. We have therefore shown that
\begin{multline*}
\prob[ \exists t \in [0, \tau] : q(t) \ge Q_0 (1+(\alpha\tau)^{1/2}) ]\\ \le \prob[ \exists t \in [0, \tau] : (X(t) - \E[X(t)]) \ge  c (\alpha\tau)^{1/2}Q_0 ] .
\end{multline*}
As above, we use the reflection principle for $(X(t) - \E[X(t)])$, along with \eqref{eq: prelim 8.5} and the fact that $X(\tau)$ is a normal random variable, to deduce that
\begin{equation}\label{eq: prelim 12}
    \prob[\exists t \in [0, \tau] : q(t) \ge Q_0 (1+(\alpha\tau)^{1/2})] \le C \exp(-c Q_0^2 \alpha)\;\text{for any} \; \beta < 0.
\end{equation}

Let $m_0\ge 1$ be a sufficiently large integer and assume until further notice that
\begin{equation}\label{eq: prelim 20}
    \alpha e^{-m_0} \ge \beta \ge 0.
\end{equation}
Observe that if $q(t)> Q_0(1+(\alpha\tau)^{1/2})$ for any $t \in [0,\tau]$, then $X(t) > c(\alpha \tau)^{1/2}Q_0$ provided $\tau$ is sufficiently small depending on $\alpha$. Combining this with \eqref{eq: prelim 13} and \eqref{eq: prelim 20} gives
\[
(X(t) - \E[X(t)]) >  (\alpha \tau)^{1/2} Q_0 ( c -  Ce^{-m_0});
\]
taking $m_0$ sufficiently large gives
\[
(X(t) - \E[X(t)]) > c(\alpha\tau)^{1/2}Q_0.
\]
We have therefore shown that
\begin{multline*}
\prob[\exists t \in [0,\tau] : q(t) > Q_0(1+(\alpha\tau)^{1/2})]\\ \le \prob[ \exists t \in [0, \tau] : (X(t) - \E[X(t)]) > c (\alpha\tau)^{1/2}Q_0].
\end{multline*}
Using \eqref{eq: prelim 4} (the reflection principle for $X(t)$), along with \eqref{eq: prelim 8.5} and the fact that $X(\tau)$ is a normal random variable, we have
\begin{equation}\label{eq: prelim 17.5}
\begin{split}
    \prob[\exists t \in [0,\tau] : q(t) > Q_0(&1+(\alpha\tau)^{1/2})]\\ &\le C \exp(-c Q_0^2 \alpha)\; \text{for any}\; 0 \le \beta \le \alpha e^{-m_0}.
    \end{split}
\end{equation}
Combining this with \eqref{eq: prelim 11} gives
\begin{equation}\label{eq: prelim 21}
\prob[\tau_* < \tau] \le C \exp(-c Q_0^2 \alpha) \;\text{for any}\; 0 \le \beta \le \alpha  e^{-m_0}.
\end{equation}

Now assume that
\begin{equation}\label{eq: prelim 22}
- \alpha  e^{-m_0} \le \beta < 0.
\end{equation}
Observe that if
\[
q(t) < Q_0(1- (\alpha\tau)^{1/2}) \;\text{for any}\; t \in [0,\tau]
\], then $X(t) < -Q_0 (\alpha\tau)^{1/2}$. Combining this with \eqref{eq: prelim 13} and \eqref{eq: prelim 22}, we get
\[
(X(t) - \E[X(t)]) < - Q_0 (\alpha\tau)^{1/2}(  c- Ce^{-m_0}).
\]
Taking $m_0$ sufficiently large gives
\[
(X(t) - \E[X(t)]) < - c Q_0(\alpha\tau)^{1/2}.
\]
Proceeding in the proof of \eqref{eq: prelim 17.5}, we deduce that
\begin{multline*}
\prob[\exists t \in (0,\tau) : q(t) < Q_0(1-(\alpha\tau)^{1/2})]\\ \le C \exp(-c Q_0^2 \alpha) \;\text{for any}\; - \alpha  e^{-m_0} \le \beta < 0.
\end{multline*}
Combining this with \eqref{eq: prelim 12} gives
\begin{equation}\label{eq: prelim 23}
    \prob[\tau_* < \tau] \le C \exp(-c Q_0^2 \alpha) \;\text{for any}\; -\alpha  e^{-m_0}\le \beta < 0.
\end{equation}
Together, \eqref{eq: prelim 21} and \eqref{eq: prelim 23} prove Part (B) of the lemma.
\end{proof}

\begin{lem}\label{lem: nhl}
    Let $Q_0 \ne 0$ and $\alpha$ be real numbers. Suppose that $q(t)$ is governed by
    \[
    dq(t) = \alpha q(t) dt + dW_t, \qquad q(0) = Q_0.
    \]
    For any $\eta \in (0, (\log(2))^{-1})$, the following hold.
    \begin{enumerate}[label=\emph{(\Alph*)},ref=\Alph*]
    \item If $\alpha>0$, then:
    \begin{enumerate}[label=\emph{(\roman*)},ref={A.\roman*}]
        \item\label{lem: nhl ai} $\eprob[q(t) < 2Q_0 \;\emph{for all}\; t \in [0, (\eta \alpha)^{-1}]] \le C \exp(-c_\eta Q_0^2 \alpha)$.
        \item\label{lem: nhl aii} $\eprob[\exists t \ge 0 : q(t) \le Q_0 / 2] \le C \exp(-c Q_0^2  \alpha)$.
    \end{enumerate}
    \item If $\alpha< 0$, then:
    \begin{enumerate}[label=\emph{(\roman*)},ref={B.\roman*}]
    \item\label{lem: nhl bi} $\eprob[q(t) > Q_0/2 \;\emph{for all}\; t \in [0, (\eta |\alpha|)^{-1}]] \le C \exp(-c_\eta Q_0^2|\alpha|)$.
    \item\label{lem: nhl biii} For any $\hat{T}>0$, we have
    \[
    \eprob[\exists t \in [0,\hat{T}] : |q(t)| \ge 2Q_0] \le C_{\hat{T}} (1+Q_0^{-2}) \exp(-c Q_0^2 |\alpha|).
    \]
    \end{enumerate}
    \item\label{lem: nhl c} If $\alpha \ne 0$, then
    \[
    \eprob[ \exists t \in [0, \eta |\alpha|^{-1}] : q(t) \ge 2Q_0\;\emph{OR}\; q(t) \le Q_0/2] \le C \exp(-c_\eta Q_0^2 |\alpha|).
    \]
    \end{enumerate}
\end{lem}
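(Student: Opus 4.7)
My plan is to reduce each sub-claim to a Gaussian tail bound, combined where needed with the reflection principle \eqref{eq: prelim 4}, applied to the auxiliary martingale from Remark \ref{rmk: xt} specialized to $\beta = 0$, namely $X(t) := e^{-\alpha t} q(t) - Q_0 = \int_0^t e^{-\alpha s} dW(s)$. This $X$ is continuous, mean zero, and Gaussian at each $t$ with $\var[X(t)] = (1 - e^{-2\alpha t})/(2\alpha)$, which is bounded by $1/(2\alpha)$ for $\alpha > 0$ and equals $(e^{2|\alpha|t} - 1)/(2|\alpha|)$ for $\alpha < 0$. I assume $Q_0 > 0$ throughout; $Q_0 < 0$ reduces to this by the symmetry $(q, W, Q_0) \mapsto (-q, -W, -Q_0)$.

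Parts (A.i), (A.ii), (B.i), and (C) follow a uniform template: identify a time (or interval) on which the target event forces $X$ to cross a threshold $\pm c_\eta Q_0$, then invoke either a Gaussian tail bound at a single time or the reflection principle for a supremum. For (A.i), at $T_1 := (\eta\alpha)^{-1}$ the hypothesis $\eta < (\log 2)^{-1}$ yields $e^{-1/\eta} < 1/2$, so $q(T_1) < 2Q_0$ forces $X(T_1) < -(1 - 2e^{-1/\eta})Q_0 = -c_\eta Q_0$; combined with $\var[X(T_1)] \le 1/(2\alpha)$ this gives the bound. Part (B.i) is the mirror image at $T_1 := (\eta|\alpha|)^{-1}$: $q(T_1) > Q_0/2$ forces $X(T_1) > (e^{1/\eta}/2 - 1)Q_0 > 0$, and $\var[X(T_1)] = (e^{2/\eta} - 1)/(2|\alpha|) \le C_\eta / |\alpha|$ suffices. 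For (A.ii), since $e^{-\alpha t} \le 1$ for $\alpha > 0$, the event $q(t) \le Q_0/2$ forces $X(t) \le -Q_0/2$; apply reflection to $X$ on $[0, T]$ and pass $T \to \infty$, using $\var[X(T)] \nearrow 1/(2\alpha)$. For (C), on $[0, \eta|\alpha|^{-1}]$ we have $e^{-\alpha t} \in [e^{-\eta}, e^\eta]$, so both $q(t) \ge 2Q_0$ and $q(t) \le Q_0/2$ translate to $|X(t)| \ge c_\eta Q_0$ for some $c_\eta > 0$ (this requires $\eta$ small enough that the thresholds $Q_0(2e^{-\alpha t} - 1)$ and $Q_0(e^{-\alpha t}/2 - 1)$ do not change sign, which is the regime where the claim is substantive); the reflection principle reduces the exit probability to a Gaussian tail at $T_1$, and $\var[X(T_1)] \le C_\eta / |\alpha|$ finishes.

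The main obstacle is Part (B.ii). A naive reflection bound on $\sup_{[0, \hat T]} X$ fails because $\var[X(\hat T)] \sim e^{2|\alpha|\hat T}/(2|\alpha|)$ is not small. The key observation is that $q(t)$ itself has bounded marginal variance $\le 1/(2|\alpha|)$ for $\alpha < 0$, so I would work with $q$ directly via a time partition. Partition $[0, \hat T]$ into $N \le 1 + \hat T C_0 |\alpha|$ intervals $[t_k, t_{k+1}]$ of length $\delta := (C_0 |\alpha|)^{-1}$ for a sufficiently large absolute constant $C_0$, and union-bound $\prob[\sup_{[0, \hat T]} |q| \ge 2Q_0] \le \sum_k \prob[\sup_{[t_k, t_{k+1}]} |q| \ge 2Q_0]$. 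On each interval the Markov property gives $q(t) = e^{\alpha(t - t_k)} q(t_k) + V_k(t)$ with $V_k(t) := \int_{t_k}^t e^{\alpha(t - s)} dW(s)$; since $|e^{\alpha(t-t_k)}| \le 1$, the event $\sup_{[t_k, t_{k+1}]} |q| \ge 2Q_0$ is contained in $\{|q(t_k)| \ge 3Q_0/2\} \cup \{\sup_{[t_k, t_{k+1}]} |V_k| \ge Q_0/2\}$. The first event has probability $\le C \exp(-c Q_0^2 |\alpha|)$ since $q(t_k)$ is Gaussian with mean in $[0, Q_0]$ and variance $\le 1/(2|\alpha|)$. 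The second is handled by applying Remark \ref{rmk: xt} to $V_k$ (with initial value $0$): its martingale transform $\tilde X(r) := e^{-\alpha r} V_k(t_k + r)$ is mean zero with $\var[\tilde X(r)] \le C/|\alpha|$ on $r \in [0, \delta]$ (using $|\alpha| \delta = 1/C_0$), and $|V_k| \le |\tilde X|$ since $\alpha < 0$; reflection and a Gaussian tail give probability $\le C \exp(-c Q_0^2 |\alpha|)$. Summing over $k$ yields $\le C(1 + \hat T |\alpha|) \exp(-c Q_0^2 |\alpha|)$. To reach the stated form, absorb the extra $|\alpha|$ via the elementary inequality $|\alpha| e^{-c Q_0^2 |\alpha|} \le C Q_0^{-2} e^{-(c/2) Q_0^2 |\alpha|}$ (from $\sup_{x > 0} x e^{-ax} = (ae)^{-1}$), obtaining $C_{\hat T}(1 + Q_0^{-2}) \exp(-c' Q_0^2 |\alpha|)$.
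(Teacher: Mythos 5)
Your proof of parts (A.i), (A.ii), (B.i), and (C) is essentially the paper's own argument: the same auxiliary process $X(t)=e^{-\alpha t}q(t)-Q_0$, the same single-time Gaussian tail for (A.i)/(B.i), and the same reflection-principle-plus-limit argument for (A.ii) and (C) (you even correct a small typo in the paper, which writes $\var[X(\hat T)]\approx\alpha$ rather than $\alpha^{-1}$; and your caveat that (C) really needs $\eta<\log 2$ rather than $\eta<(\log 2)^{-1}$ is a defect of the paper's statement shared by its proof, harmless since the paper only applies (C) with $\eta=11/100$). The genuine difference is part (\ref{lem: nhl biii}): the paper dismisses it as ``an easy consequence of Lemma 4.3 in \cite{almostoptimal2023}'' and gives no argument, whereas you supply a self-contained proof by partitioning $[0,\hat T]$ into $O(\hat T|\alpha|)$ blocks of length $(C_0|\alpha|)^{-1}$, splitting each block event into a marginal bound on $q(t_k)$ (mean in $(0,Q_0]$, variance $\le 1/(2|\alpha|)$ since $\alpha<0$) plus a reflection bound on the restarted noise $V_k$, and then absorbing the factor $\hat T|\alpha|$ via $\sup_{x>0}xe^{-ax}=(ae)^{-1}$. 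I checked this argument and it is correct; it also transparently accounts for the $C_{\hat T}(1+Q_0^{-2})$ prefactor in the statement, which the paper's citation leaves opaque. So: same route for four of the five claims, and a more elementary, self-contained replacement for the one claim the paper outsources --- at the cost of a somewhat longer write-up for that part.
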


\begin{proof}
    We first remark that Part (B.ii) is an easy consequence of Lemma 4.3 in \cite{almostoptimal2023}.

    Motivated by Remark \ref{rmk: xt}, we define
    \[
    X(t) = e^{-\alpha t} q(t) - Q_0,
    \]
    and we recall that $X(t)$ is a normal random variable with mean 0 and
    \begin{equation}\label{eq: nhl 1}
    \var[ X(t)] = \frac{1-e^{-2\alpha t}}{2\alpha}.
    \end{equation}

    Assume until further notice that $\alpha > 0$. If $q((\eta \alpha)^{-1}) < 2 Q_0$, then
\[
X((\eta \alpha)^{-1}) < e^{-1/\eta}2Q_0 - Q_0 < - c_\eta Q_0.
\]
Therefore
\[
\prob[q(t) < 2Q_0 \;\text{for all}\; t \in [0,(\eta \alpha)^{-1}]] \le \prob[ X((\eta \alpha )^{-1}) < - c_\eta Q_0].
\]
By \eqref{eq: nhl 1}, 
\begin{equation}\label{eq: nhl 1.5}
    \var[X((\eta \alpha)^{-1})] \approx \alpha^{-1}.
\end{equation}
Since $X((\eta \alpha)^{-1})$ is a normal random variable with mean 0, we conclude that
\begin{equation}\label{eq: nhl 2}
\prob[q(t) < 2 Q_0 \;\text{for all}\; t \in  [0, (\eta \alpha)^{-1}]] \le C \exp(-c_\eta Q_0^2 \alpha).
\end{equation}
This proves (A.i)

To prove (A.ii), it suffices (e.g., by Fatou's Lemma) to show that for any sufficiently large $\hat{T}>0$ we have
\begin{equation}\label{eq: nhl 3}
\prob[ \exists t \in [0,\hat{T}] : q(t) \le Q_0/2] \le C \exp(-c Q_0^2 \alpha).
\end{equation}
If $q(t) \le Q_0/2$ for some $t \in [0, \hat{T}]$, then $X(t) \le - Q_0/2$. Combining this with \eqref{eq: prelim 4} (i.e., the reflection principle for $X(t)$), we see that
\[
\prob[\exists t \in [0, \hat{T}] : q(t) \le Q_0 / 2] \le 2\cdot \prob[ X(\hat{T}) < - Q_0 / 2].
\]
To prove \eqref{eq: nhl 3}, we note that $\var[X(\hat{T})] \approx \alpha$ and that $X(\hat{T})$ is a normal random variable with mean 0.

Now observe that if $q(t) \ge 2Q_0$ for some $t \in [0, \eta \alpha^{-1}]$, then
\[
X(t) \ge e^{-\alpha t} 2 Q_0 - Q_0 \ge c_\eta Q_0.
\]
By \eqref{eq: prelim 4} (i.e., the reflection principle for $X(t)$), we have
\[
\prob[\exists t \in [0, \eta \alpha^{-1}] : q(t) \ge 2Q_0] \le 2\cdot \prob[ X(\eta \alpha^{-1}) \ge c_\eta Q_0].
\]
Observe that $\var[X(\eta \alpha^{-1})] \approx_\eta \alpha^{-1}$; combining this with the fact that $X(\eta \alpha^{-1})$ is a normal random variable with mean 0 gives
\begin{equation}\label{eq: nhl 4}
\prob[\exists t \in [0, \eta \alpha^{-1}] : q(t) \ge 2Q_0] \le C \exp(-c_\eta Q_0^2 \alpha).
\end{equation}
We combine \eqref{eq: nhl 3} and \eqref{eq: nhl 4} to get
\begin{equation}\label{eq: nhl 5}
\begin{split}
    \prob[\exists t \in [0, \eta \alpha^{-1}] : (q(t) \ge 2Q_0)\;\text{OR}\; &(q(t) \le Q_0 / 2)]\\ &\le C \exp(-c_\eta Q_0^2 \alpha)\;\text{for any}\; \alpha > 0.
\end{split}
\end{equation}
This proves (C) when $\alpha>0$.

We omit the proofs of (B.i), and (C) when $\alpha<0$; they are essentially the same as the proofs of \eqref{eq: nhl 2} and \eqref{eq: nhl 5} (respectively).
\end{proof}

\section{Agnostic Control Strategies}\label{sec: strategies}
We begin this section by establishing some conventions and notation:
\begin{itemize}
    \item We introduce a \emph{starting position} $Q_0 \ne 0$, a \emph{time horizon} $T>0$, and a \emph{growth parameter} $a \in \R$.
    \item Constants $c,C,C',$ etc.\ denote absolute constants.
    \item \sloppypar Recall that $W(t)$ denotes a copy of standard Brownian motion, $(\Omega, \cF, \prob)$ denotes the corresponding probability space, and, for any $t \ge 0$, we write $\cF_t$ to denote the sigma-algebra determined by the history $(W(s))_{s \in [0,t]}$.
\end{itemize}

A \emph{strategy} (for starting position $Q_0$, time horizon $T$, and growth parameter $a$) is a collection of random variables $q(t,b)$ and $u(t,b)$ defined for all $t \in [0,T]$ and $b \in \R$ satisfying the following properties:
\begin{enumerate}[label=(S.\arabic*)]
\item For every $b \in \R$, $q(t,b)$ is a continuous function of $t$ with probability 1 and $u(t,b)$ is an $L^2$ function of $t$ with probability 1.
\item\label{s4} For every $b \in \R$ and $t \in [0,T]$ the maps $(s,\omega) \mapsto q(s,b,\omega)$ and $(s,\omega) \mapsto u(s,b,\omega)$, defined on $[0,t]\times \Omega$, are measurable as functions on $[0,t]\times (\Omega, \cF_t, \prob)$. Intuitively, this means that $q,u$ are determined by the past.
 \item For every $b \in \R$,
    \[
    \E\bigg[ \int_0^{T} ( q(t,b))^2 + (u(t,b))^2\ dt \bigg] < \infty.
    \]
    \item For almost all $\omega \in \Omega$, we have that for all $b,b'\in \R$, and for all $t \in [0,T]$, if $q(s,b,\omega) = q(s,b',\omega)$ for all $s \in [0,t]$, then $u(s,b,\omega) = u(s,b',\omega)$ for almost all $s \in [0,t]$. This tells us that $u$ does not depend on the unknown $b$.
    \item\label{s2} For every $b \in \R$ and $t \in [0,T]$, we have
    \[
    q(t,b) = Q_0 + W(t) + \int_0^t [a q(s,b) + bu(s,b)]\ d s
    \]
    with probability 1.
\end{enumerate}
For a given $b \in \R$, we refer to $q(t,b)$ and $u(t,b)$ as, respectively, the \emph{particle trajectory} and the \emph{control variable} at time $t$. 

Let $\sigma$ denote an arbitrary strategy. We then write $q^\sigma(t,b)$, $u^\sigma(t,b)$ for the particle trajectories and control variables associated with $\sigma$. Often, the parameter $b$ is implicit and we just write $q^{\sigma}(t)$, $u^\sigma(t)$.

We define the \emph{cost} of a strategy $\sigma$ to be the random variable
\[
\cost(\sigma, b) = \int_0^{T} \big(( q^\sigma(t,b))^2 + (u^\sigma(t,b))^2\big)\ dt.
\]
We then define the \emph{expected cost} of a strategy $\sigma$ by
\[
\ecost(\sigma,b) = \E[\cost(\sigma,b)].
\]
Given any smooth function $v:[0,T]\rightarrow \R$, we can define a strategy $\tilde{\sigma}$ by setting
\[
u^{\tilde{\sigma}}(t,b) = - v(t) q^{\tilde{\sigma}}(t,b)
\]
(note that $q^{\tilde{\sigma}}$ is then determined by Property (S.5)). We refer to such a strategy $\tilde{\sigma}$ as a \emph{simple feedback strategy} with \emph{gain function} $v$.

We now establish some upper bounds on the expected cost of simple feedback strategies.

Let $\tilde{\sigma}$ denote the simple feedback strategy with gain function $v$. One can show (as in \cite{almostoptimal2023}) that for any $b \in \R$ we have
\begin{equation*}
\ecost(\tilde{\sigma},b) = \varphi(0,b) \cdot Q_0^2 + \int_0^{T} \varphi(t,b) \ dt,
\end{equation*}
where
\[
\varphi(t,b) = \int_t^{T} (1+v^2(\tau))\cdot \exp\bigg( 2 \int_t^\tau \big(\alpha - b v(s)\big) \ ds \bigg)\ d\tau.
\]
This implies the following remark.

\begin{rmk}\label{rmk: cg}
Let $\ecg(\alpha)$ denote the simple feedback strategy with constant gain function $v \equiv \alpha \in \R$ (so that $u^{\ecg(\alpha)}(t) = - \alpha\cdot( q^{\ecg(\alpha)}(t))$). Then the following hold.
\begin{enumerate}[label={\emph{(\Alph*)}}, ref={\Alph*}]
    \item\label{rmk: cg a} If $a<b\alpha$, then
    \[
    \ecost(\ecg(\alpha),b) \le \frac{(Q_0^2+T) (1+\alpha^2)}{|a - b \alpha|}.
    \] 
    \item\label{rmk: cg b} Let $A= \max\{|\alpha|, |a -b \alpha|\}$. Then
    \[
    \ecost(\ecg(\alpha),b) \le C_A(Q_0^2 + T).
    \]
    \item\label{rmk: cg c} If $a > 0$ and $\alpha = 0$, then
    \[
    \ecost(\ecg(0),b) \le (Q_0^2 + T) \frac{e^{2aT}}{2a}\;\text{for any}\; b \in \R.
    \]
\end{enumerate}
\end{rmk}

For any $\beta \in \R$, we define a simple feedback strategy $\sigma_\op(\beta)$ via the control variable
\[
u^{\sigma_\op(\beta)}(t) = - \beta \cdot \kappa(T-t,\beta) \cdot q^{\sigma_\op(\beta)}(t),
\]
where $\kappa:[0,T]\times \R \rightarrow [0,\infty)$ is defined by
\begin{equation}\label{eq: cc 2}
\kappa(t,\beta) = \kappa(t,\beta;a) = \begin{cases}
t & \text{if } a=\beta=0,\\
    \frac{\tanh(t\sqrt{a^2+\beta^2})}{\sqrt{a^2+\beta^2} - a \tanh(t\sqrt{a^2+\beta^2})} & \text{else}.
\end{cases}
\end{equation}

It is well-known (see, e.g., \cite{astrom}) that for any $b \in \R$ the strategy $\sigma_\op(b)$ minimizes the quantity $\ecost(\sigma,b)$ among simple feedback strategies. (In fact, by a straightforward modification of an argument in \cite{almostoptimal2023}, one can show that $\sigma_\op(b)$ minimizes $\ecost(\sigma,b)$ among \emph{all} strategies.)  For $b\in\R$ we define
\[
\ecost_\op(b) = \ecost(\sigma_\op(b),b).
\]
Of course, both the strategies $\sigma_\op(\beta)$ and the quantity $\ecost_\op(b)$ depend on the parameters $a$, $T$, $Q_0$.

It is well known (again, see \cite{astrom}) that $\ecost_\op(b)$ is of the form
\begin{equation}\label{eq: cc 3}
\ecost_\op(b) = \kappa(T,b)\cdot Q_0^2 + \int_0^T \kappa(t,b)\ dt,
\end{equation}
where $\kappa(t, b)$ is defined in \eqref{eq: cc 2}. We use \eqref{eq: cc 3} to prove the following lemma.
\begin{lem}\label{lem: j0}
We have the following lower bounds on $\ecost_\eop(b)$.
    \begin{enumerate}[label={\emph{(\Alph*)}}]
        \item If $a \ge 1$, then
    \[
    \ecost_\eop(b) \ge c_{T}\cdot Q_0^2 \cdot \begin{cases}
         \frac{e^{2aT}}{a} &\text{for any}\; |b| \le  a e^{-aT},\\
        \frac{a}{b^2} &\text{for any}\;  a e^{-aT} \le |b| \le a,\\
        \frac{1}{|b|} &\text{for any}\; |b| \ge a.
    \end{cases}
    \]
    \item If $a \le 1$, then
    \[
    \ecost_\eop(b) \ge \frac{ c_{T}\cdot Q_0^2}{1+|a|+|b|}\;\text{for any}\; b \in \R.
    \]
    \end{enumerate}
\end{lem}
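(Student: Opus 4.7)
The plan is to derive both inequalities directly from the closed-form expression \eqref{eq: cc 3}. Since $\kappa(t,b)\ge 0$, we have
\[
\ecost_\op(b)\ge\kappa(T,b)\cdot Q_0^2,
\]
so it suffices to produce matching lower bounds for $\kappa(T,b)$. Writing $R=\sqrt{a^2+b^2}$, the formula \eqref{eq: cc 2} reads $\kappa(T,b)=\tanh(TR)/(R-a\tanh(TR))$. The elementary inputs I use repeatedly are: $R-a=b^2/(R+a)\le b^2/(2a)$ for $a>0$; the tail bound $1-\tanh(x)\le 2e^{-2x}$ for $x\ge 0$; and $\tanh(x)\ge c\min(x,1)$ for an absolute constant $c>0$. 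I will not need the integral term in \eqref{eq: cc 3}.

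For Part (B), where $a\le 1$ (but possibly very negative), the denominator cannot become exponentially small, and one has the crude bound $R-a\tanh(TR)\le R+|a|\tanh(TR)\le 2R$ (using $|a|\le R$), so $\kappa(T,b)\ge\tanh(TR)/(2R)$. Splitting on whether $TR\le 1$ (where $\tanh(TR)\ge cTR$ gives $\kappa\ge cT$) or $TR>1$ (where $\tanh(TR)\ge\tanh(1)$ gives $\kappa\ge c/R$), and using $R\le|a|+|b|$, a short calculation yields $\kappa(T,b)\ge c_T/(1+|a|+|b|)$, with the $T$-dependence entering only through the small-$TR$ branch.

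Part (A), where $a\ge 1$, is more delicate because the denominator can be exponentially small in $aT$. I handle the three subcases separately. For $|b|\le ae^{-aT}$, I decompose
\[
R-a\tanh(TR)=(R-a)+a(1-\tanh(TR))\le\tfrac{b^2}{2a}+2ae^{-2TR}\le Cae^{-2aT};
\]
when $aT\ge 1$ the numerator satisfies $\tanh(TR)\ge\tanh(1)$, giving $\kappa\ge ce^{2aT}/a$, while when $aT<1$ the target $e^{2aT}/a$ is itself of size $\le e^2/a$, and the trivial estimate $\kappa\ge\tanh(TR)/R\ge cT$ suffices. For $ae^{-aT}\le|b|\le a$, the lower bound on $|b|$ gives $2ae^{-2aT}\le 2b^2/a$, so the same decomposition bounds the denominator by $5b^2/(2a)$; combined with a lower bound on $\tanh(TR)$ this yields $\kappa\ge c_T a/b^2$, with the small-$aT$ subregime again handled by the trivial estimate. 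Finally, for $|b|\ge a$, I use $R-a\tanh(TR)\le R\le\sqrt{2}|b|$ together with $\tanh(TR)\ge c\min(T|b|,1)$; since $|b|\ge a\ge 1$, both branches of the minimum produce $\kappa\ge c_T/|b|$.

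The main source of bookkeeping is the transition in subcases (A.i) and (A.ii) between the regime where $aT$ is large (where the exponential estimates are essentially sharp) and the regime where $aT$ is small (where the stated bounds reduce to the trivial $\kappa\ge cT$ because the target quantity is itself of order one). Apart from that, the proof is a routine exercise in manipulating $\tanh$.
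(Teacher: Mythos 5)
Your proposal is correct and takes essentially the same route as the paper: both start from $\ecost_\op(b)\ge Q_0^2\cdot\kappa(T,b)$, handle Part (B) and the regime $|b|\ge a$ by the crude bound $\kappa(T,b)\ge c_T/(1+|a|+|b|)$ (which the paper merely asserts as straightforward and you verify explicitly), and for the remaining cases of Part (A) bound the denominator of $\kappa(T,b)$ by $b^2/a$ plus the exponential tail $a\bigl(1-\tanh(T\sqrt{a^2+b^2})\bigr)\le Ca e^{-2aT}$. The only cosmetic difference is your split between small and large $aT$, which the paper sidesteps by using $\tanh(T\sqrt{a^2+b^2})\ge\tanh(T)\ge c_T$ uniformly, since $\sqrt{a^2+b^2}\ge a\ge 1$; since constants may depend on $T$, your extra case analysis is unnecessary but harmless.
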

\begin{proof}
Observe that
    \[
    \kappa(t,b) \ge 0 \;\text{for any}\; t \in [0,T],
    \]
    and therefore
    \begin{equation}\label{eq: cc 3.5}
        \ecost_\op(b) \ge Q_0^2 \cdot \kappa(T,b).
    \end{equation}
It is straightforward to verify that
\[
\kappa(T,b) > \frac{c_T}{1+ |a|+|b|}\;\text{for any}\; a,b \in \R;
\]
combining this with \eqref{eq: cc 3.5} implies Part (B) of the Lemma as well as the case $|b| \ge a$ in Part (A).

    We now remark that
\begin{equation}\label{eq: cc 6}
\begin{split}
        &\tanh(x) \ge (1 - C e^{-2x}) \;\text{for any}\; x \ge 0.
\end{split}
\end{equation}
Assume that $a \ge 1$. Then clearly
\[
\kappa(T,b) \ge \frac{c_T}{\sqrt{a^2+b^2} - a \tanh(T\sqrt{a^2+b^2})}\;\text{for any}\; b \in \R.
\]
Combining this with \eqref{eq: cc 6} gives
\[
\kappa(T,b)  \ge \frac{c_T}{ (b^2/a) + aC e^{-2T\sqrt{a^2+b^2}}}.
\]
From this inequality and \eqref{eq: cc 3.5}, we easily deduce the remaining two cases of Part (A).
\end{proof}

\section{Conventions for the remainder of the paper}\label{sec: conventions}

The remainder of this paper is devoted to proving Theorem \ref{thm: main}.

Throughout Sections \ref{sec: def}--\ref{sec: neg}, the following hold:
\begin{itemize}
    \item We fix a starting position $|q_0|\ge 1$ and a time horizon $T>0$.
    \item For any $a,b \in \R$, we write $\ecost_\op(b;a)$ to denote the optimal expected cost for known $b$ with time horizon $T$, starting position $q_0$, and growth parameter $a$. Similarly, we write $\ecost(\sigma,b;a)$ to denote the expected cost of a strategy $\sigma$ for time horizon $T$, starting position $q_0$, and growth parameter $a$.
    \item Often, the value of the growth parameter $a$ will be clear from context and we will just write $\ecost_\op(b)$, $\ecost(\sigma, b)$.
    \item We let $A\ge 1$ be a large real number depending only on $T$.
    \item Constants $c,C,C',$ etc.\ are allowed to depend on $q_0$ and $T$ (and therefore on $A$).
\end{itemize}

For any $a \in \R$, we will define in Sections \ref{sec: def}--\ref{sec: neg} a strategy denoted $\br$ for time horizon $T$, starting position $q_0$, and growth parameter $a$. We will then show that there exists a constant $C>1$ so that
\[
    \ecost(\br,b;a) \le C \cdot\ecost_\op(b;a)\;\text{for any}\; a,b \in \R.
\]
Once we do this, we've proved Theorem \ref{thm: main}.

\section{The strategy for large, positive $a$}\label{sec: def}

We let $a$ denote the growth parameter, and we assume throughout this section that
\[
a \ge A.
\]
Our goal in this section is to prove Theorem \ref{thm: main} for such $a$.

We let $0 < \tau \ll 1$ denote a small real number depending on $a$.

We let $K \ge 1$ denote a large absolute constant, to be chosen later. Of course, constants $c, C, C',$ etc.\ may depend on $K$.

In a moment we will define the strategy $\br$ (referenced in Theorem \ref{thm: main}) in the case $a \ge A$. Throughout this section we write $q,u$ for the particle trajectories and control variables $q^{\br},u^{\br}$.

We define a quantity
\begin{equation}\label{eq: nu star def}
\nu^* = \lfloor a T \rfloor.
\end{equation}
The strategy $\br$ is divided into $(\nu^*+2)$ Epochs; we enumerate them starting from 0. Only the first of the Epochs (Epoch 0) is guaranteed to occur. Each of the Epochs $0,\dots,\nu^*$ (all but the final Epoch) is divided into two or more Subepochs. When one of these Epochs occurs, only the first Subepoch (the \emph{Testing} Subepoch) is guaranteed to occur.

We now define the strategy $\br$.

\noindent \underline{Epoch 0}: Our strategy begins, at time 0, with Subepoch $0.\I$.

\underline{Subepoch $0.\I$ (Testing)}: During Subepoch $0.\I$ we set $u=-q$. Subepoch $0.\I$ ends at time $\tau_0$, where $\tau_0$ is equal to the first time $t \in (0,(10a)^{-1})$ for which $q(t) = 2q_0$ or $q(t) = \frac{1}{2} q_0$ if such a time exists and is equal to $(10a)^{-1}$ if no such time exists.

In the event that $\tau_0 < (10a)^{-1}$, then at time $\tau_0$ we enter Subepoch $0.\ii$. In the event that $\tau_0 = (10a)^{-1}$, then at time $\tau_0$ we enter Epoch 1.

We stipulate that $A$ is large enough to ensure that $(10a)^{-1} < T$, so that $\tau_0 < T$ with probability 1.

\underline{Subepoch $0.\ii$ (Control)}: Note that $\tau_0$ is equal to the time at which we enter Subepoch $0.\ii$ if such a time exists and equal to $T$ if no such time exists.

Suppose that we enter Subepoch $0.\ii$, i.e., suppose that $\tau_0 < (10a)^{-1}$; we then have either $q(\tau_0) = \frac{1}{2} q_0$ or $q(\tau_0) = 2q_0$ with probability 1.

If $q(\tau_0)= \frac{1}{2}q_0$, then during Subepoch $0.\ii$ we set $u(t) = -K\cdot q(t)$.

If $q(\tau_0)= 2q_0$, then during Subepoch $0.\ii$ we set $u(t) = K\cdot q(t)$.

Subepoch $0.\ii$ then lasts until time $\titau_0$, where $\titau_0$ is equal to the first time $t \in (\tau_0, T)$ for which $|q(t)| \ge 2 q(\tau_0)$ if such a time exists and is equal to $T$ if no such time exists. If $\titau_0 =T$, then the game ends along with Subepoch $0.\ii$ at time $\titau_0$. If $\titau_0<T$, then at time $\titau_0$ we enter Epoch 1.

\noindent \underline{Epoch $\nu$ (for $1\le \nu \le \nu^*$)}:

Define a stopping time $t_\nu$ to be equal to the time at which we enter Epoch $\nu$ if such a time exists and equal to $T$ if no such time exists.

Epoch $\nu$ is divided into three Subepochs, only the first of which (Subepoch $\nu.\I$) is guaranteed to occur.

\underline{Subepoch $\nu.\I$ (Testing):} Assume that $t_\nu <T$; we then enter Subepoch $\nu.\I$ at time $t_\nu$. During Subepoch $\nu.\I$ we set
\[
u = \frac{e^\nu q(t_\nu)}{(a\tau)^{1/2}},
\]
and Subepoch $\nu.\I$ lasts until time $\tau_\nu$, where $\tau_\nu$ is equal to the first time $t$ in the interval $(t_\nu, \min\{T, t_\nu + \tau\})$ for which
\[
|q(t) - q(t_\nu)| >  |q(t_\nu)| (a\tau)^{1/2}
\]
if such a time exists and equal to $\min\{ T, t_\nu + \tau\}$ if no such time exists.

If $\tau_\nu < \min\{T, t_\nu + \tau\}$, then at time $\tau_\nu$ we enter Subepoch $\nu.\ii$.

If $\tau_\nu = (t_\nu + \tau)$, then at time $\tau_\nu$ we enter Epoch ($\nu$ + 1).

If $\tau_\nu = T$, then at time $\tau_\nu$ the game ends.

\underline{Subepoch $\nu.\ii$ (Control I)}:
Define a stopping time $\tit_\nu$ to be equal to the time at which we enter Subepoch $\nu.\ii$ if such a time exists and equal to $T$ if no such time exists. Note that we enter Subepoch $\nu.\ii$ only if we enter Subepoch $\nu.\I$.

Suppose that we enter Subepoch $\nu.\ii$, i.e., suppose that $\tit_\nu<T$. Then during Subepoch $\nu.\ii$ we set $u = 1000 e^\nu q$, and Subepoch $\nu.\ii$ lasts until time $\titau_\nu$, where $\titau_\nu$ is equal to the first time $t \in (\tit_\nu, T)$ for which $|q(t)| \ge 2 |q(\tit_\nu)|$ if such a time exists and equal to $T$ if no such time exists.

If $\titau_\nu < T$, then at time $\titau_\nu$ we enter Subepoch $\nu.\iii$.

If $\titau_\nu = T$, then at time $\titau_\nu$ the game ends (along with Subepoch $\nu.\ii$).

\underline{Subepoch $\nu$.iii (Control II)}: Define a stopping time $\htt_\nu$ to be equal to the time at which we enter Subepoch $\nu.\iii$ if such a time exists and equal to $T$ if no such time exists. Note that we enter Subepoch $\nu.\iii$ only if we enter Subepoch $\nu.\ii$.

Suppose that we enter Subepoch $\nu$.iii. Then during Subepoch $\nu$.iii, we set $u = - 1000 e^\nu q$ and Subepoch $\nu$.iii lasts until time $\httau_\nu$, where $\httau_\nu$ is equal to the first time $t \in (\htt_\nu, T)$ for which $|q(t)| \ge 2 |q(\htt_\nu)|$ if such a time exists and equal to $T$ if no such time exists.

If $\httau_\nu < T$, then at time $\httau_\nu$ we enter Epoch $(\nu+1)$.

If $\httau_\nu = T$, then at time $\httau_\nu$ the game ends (along with Subepoch $\nu.\iii$).

\noindent \underline{Epoch $(\nu^*+1)$ (Apathy)}:  Define a stopping time $t_{\nu^*+1}$ to be equal to the time at which we enter Epoch $(\nu^*+1)$ if such a time exists and equal to $T$ if no such time exists. If we enter Epoch $(\nu^*+1)$, then we set $u = 0$ until the end of the game at time $T$.

This completes the definition of our strategy $\br$. It is straightforward to verify that the strategy $\br$ satisfies the definition of a strategy given in Section \ref{sec: strategies}; we will not give the details here. (For more detail see the discussion of branching strategies in \cite{almostoptimal2023}.)

We say that a given Epoch or Subepoch occurs if there exists some time at which we enter the given Epoch or Subepoch. 

For $0 \le \nu \le (\nu^*+1)$ we define, for a given value of $b$, the random variable $\cost_\nu(b)$ to be equal to the cost incurred during Epoch $\nu$ if Epoch $\nu$ occurs and equal to 0 if Epoch $\nu$ does not occur. Clearly
\[
\ecost(\br,b) = \sum_{\mu=0}^{\nu^*+1} \E[\cost_\mu(b)].
\]

For $0 \le \nu \le \nu^*$, we define the random variable $\cost_{\nu.j}(b)$ to be equal to the cost incurred during Subepoch $\nu.j$ (with $j\in \{\I,\ii,\iii\}$ if $\nu \ge 1$ and $j \in \{\I,\ii\}$ if $\nu = 0$) if Subepoch $\nu.j$ occurs and equal to zero if Subepoch $\nu.j$ does not occur.

We define events
\begin{equation*}
\begin{aligned}
\cE_\nu &= \{\text{Epoch}\; \nu \;\text{occurs}\} &&\text{for}\; 0 \le \nu \le (\nu^*+1),\\
\cE_{\nu.\text{ii}} &= \{\text{Subepoch}\;\nu.\text{ii occurs}\}\qquad &&\text{for} \; 0 \le \nu \le \nu^* ,\\
\cE_{\nu.\text{iii}} &= \{\text{Subepoch}\; \nu.\text{iii occurs}\}\qquad &&\text{for}\; 1 \le \nu \le \nu^*.
\end{aligned}
\end{equation*}
Note that
\begin{equation*}
\begin{aligned}
    & \cE_{\nu.\iii}\subset\cE_{\nu.\ii}\subset\cE_\nu& &\text{for}\; 1\le \nu \le \nu^*,\\
    &\cE_\nu \subset \bigcap_{\mu=0}^{\nu-1}\cE_\mu& &\text{for}\; 1\le \nu \le (\nu^*+1).
\end{aligned}
\end{equation*}
For any $1 \le \nu \le (\nu^*+1)$ and $0\le m \le \nu$ we define the event
\[
\cE_{\nu,m} = \{ \cE_\nu \text{ AND } (\cE_{\mu.\ii}\;\text{for exactly }m\text{ integers } \mu \text{ with } 0 \le \mu < \nu)\}.
\]
Note that by taking $\tau$ sufficiently small depending on $a$, we can ensure that
\begin{align*}
&(1+(a\tau)^{1/2})^{\nu^*} < \frac{11}{10},\\
&(1-(a\tau)^{1/2})^{\nu^*} > \frac{9}{10}
\end{align*}
(recall that the quantity $\nu^*$ is determined by $a$). This implies the following: Suppose that the event $\cE_{\nu,m}$ occurs and that we enter Epoch $\nu$ at time $t_\nu$. Then with probability 1 we have
\begin{equation}\label{eq: q bound}
c\cdot 4^{m} \le |q(t_\nu)| \le C\cdot 4^m.
\end{equation}

For any stopping time $\tau^*$ we define
\[
\cF_{\tau^*} = \{ A \in \cF : A \cap \{\tau^* \le s \} \in \cF_s \;\text{for all}\; s \in [0,T]\}.
\]
Note that $\cF_{\tau^*}$ is a sub $\sigma$-algebra of $\cF$ and that the random variables $\tau^*$ and $(q(s))_{0 \le s \le \tau^*}$ are $\cF_{\tau^*}$-measurable.

\subsection{Performance of the strategy during Epoch 0}\label{sec: 0}

Recall that we set $u = - q$ during Subepoch $0.\I$ and that Subepoch $0.\I$ ends at time $\tau_0$. Assume that $|b| \ge 100a$. Then
\[
\prob[\tau_0 > 10|b|^{-1}]\le \prob[\tau_0 > (10|a-b|/99)^{-1}].
\]
Using Parts \ref{lem: nhl ai} and \ref{lem: nhl bi} of Lemma \ref{lem: nhl} (and that our assumption implies $10|b|^{-1} \le(10a)^{-1}$), we deduce that
\begin{equation}\label{eq: ep0 1}
    \prob[\tau_0 > 10|b|^{-1}] \le C \exp(-c |b|) \;\text{for any}\; |b| \ge 100 a.
\end{equation}

Assume that $|b| \le a/10$; then
\[
\prob[\cE_{0.\ii}] = \prob[ \tau_0 < (10a)^{-1}] \le \prob[\tau_0 < (11|a-b|^{-1}/100)].
\]
By Lemma \ref{lem: nhl}.\ref{lem: nhl c}, we have
\begin{equation}\label{eq: ep0 2}
    \prob[\cE_{0.\ii}] \le C \exp(-c a) \;\text{for any}\; |b| \le  a /10.
\end{equation}

We now claim that
\begin{equation}\label{eq: ep0 3}
    \prob[\cE_1] \le C \exp(-c|b|)\;\text{for any}\; |b| \ge 100a.
\end{equation}
Observe that
\begin{equation}\label{eq: ep0 12}
\begin{split}
    \prob[\cE_1] \le & \prob[\tau_0 = (10a)^{-1}] + \prob[(q(\tau_0) = 2q_0 \;\text{AND}\; \cE_1]\\
    & + \prob[(q(\tau_0) = q_0/2)\;\text{AND}\; \cE_1].
\end{split}
\end{equation}
Inequality \eqref{eq: ep0 1} implies that
\begin{equation}\label{eq: ep0 13}
    \prob[\tau_0 = (10a)^{-1}] \le C \exp(-c|b|) \;\text{for}\; |b| \ge 100a.
\end{equation}
By Lemma \ref{lem: nhl}.\ref{lem: nhl biii}, we have
\begin{equation}\label{eq: ep0 14}
    \prob[q(\tau_0) = 2q_0]  \le C \exp(-c |b|) \;\text{for any}\; b \ge 100a.
\end{equation}
Recall that if we enter Subepoch $0.\ii$ with $q(\tau_0) = q_0 /2 $, then we set $ u = - K q$. We then enter Epoch 1 if and only if we encounter $|q| = q_0$. Therefore, by Lemma \ref{lem: nhl}.\ref{lem: nhl biii},
\begin{equation}\label{eq: ep0 15}
    \prob[(q(\tau_0) = q_0/2)\;\text{AND}\;\cE_1] \le C \exp(-c|b|)\;\text{for any}\; b \ge 100a.
\end{equation}
Similarly, if we enter Subepoch $0.\ii$ with $q(\tau_0) = 2q_0$, then we set $u = Kq$. We then enter Epoch 1 if and only if we encounter $|q| = 4q_0$. Therefore (again by Lemma \ref{lem: nhl}.\ref{lem: nhl biii}),
\begin{equation}\label{eq: ep0 16}
    \prob[(q(\tau_0) = 2q_0)\;\text{AND}\; \cE_1] \le C \exp(-c |b|)\;\text{for any}\; b \le -100a.
\end{equation}
By Lemma \ref{lem: nhl}.\ref{lem: nhl aii},
\begin{equation}\label{eq: ep0 17}
    \prob[q(\tau_0) = q_0/2] \le C \exp(-c|b|)\;\text{for any}\; b \le -100a.
\end{equation}
Combining \eqref{eq: ep0 12}-\eqref{eq: ep0 17} proves \eqref{eq: ep0 3}.

The remainder of this section is devoted to showing that
\begin{equation}\label{eq: ep0 11}
\E[\cost_0(b)] \le C \cdot \ecost_\op(b)\;\text{for any}\; b \in \R.
\end{equation}

During Subepoch $0.\I$ we set $u=-q$ and we have $|q| \le C$ with probability 1. Since Subepoch $0.\I$ lasts for at most time $(10a)^{-1}$, we have
\begin{equation}\label{eq: ep0 4}
    \E[\cost_{0.\I}(b)] \le C a^{-1}\;\text{for any}\; b \in \R.
\end{equation}
Additionally, \eqref{eq: ep0 1} implies that
\begin{equation}\label{eq: ep0 5}
    \E[\cost_{0.\I}(b)] \le C |b|^{-1}\;\text{for any}\; |b| \ge 100a.
\end{equation}
Combining \eqref{eq: ep0 4} and \eqref{eq: ep0 5} with Lemma \ref{lem: j0} gives
\begin{equation}\label{eq: ep0 6}
    \E[\cost_{0.\I}(b)] \le C \cdot \ecost_\op(b)\;\text{for any}\; b \in \R.
\end{equation}

During Subepoch 0.ii (if it occurs) we again have $|q| \le C$ and thus $|u| \le C$ (since $|u| = K |q|$). Therefore \eqref{eq: ep0 2} implies that
\begin{equation}\label{eq: ep0 7}
    \E[\cost_{0.\ii}(b)] \le C a^{-1}\;\text{for any}\; |b|\le a /10.
\end{equation}
Recall that if we enter Subepoch 0.ii with $q(\tau_0) = q_0/2$, then during Subepoch 0.ii we set $u=-K q$.  By Remark \ref{rmk: cg}.\ref{rmk: cg a}, we therefore have
\begin{equation}\label{eq: ep0 8}
    \E[\cost_{0.\ii}(b)\cdot \mathbbm{1}_{q(\tau_0) = q_0/2}] \le C |b|^{-1}\;\text{for any}\; b \ge a /10
\end{equation}
provided $K$ is large enough. 

Assume that $b \le - a/10$. Then $|b|^{-1} > K|a-bK|^{-1}$, so by Lemma \ref{lem: nhl}.\ref{lem: nhl ai} we have
\[
\E\big[\mathbbm{1}_{q(\tau_0)=q_0/2}\cdot \prob[(\tilde{\tau}_0>|b|^{-1} )| \cF_{\tau_0}]\big] \le C\exp(-c|b|)
\]
provided $K$ is sufficiently large. Since
\begin{multline*}
\E[\cost_{0.\ii}\cdot \mathbbm{1}_{q(\tau_0)=q_0/2}]\\ \le C |b|^{-1} + C \cdot\E\big[\mathbbm{1}_{q(\tau_0)=q_0/2}\cdot \prob[(\tilde{\tau}_0>|b|^{-1}) | \cF_{\tau_0}]\big],
\end{multline*}
we deduce that
\begin{equation}\label{eq: ep0 9}
    \E[\cost_{0.\ii}(b) \cdot \mathbbm{1}_{q(\tau_0) = q_0/2}] \le C |b|^{-1} \;\text{for any}\; b \le - a/10.
\end{equation}

If we enter Subepoch 0.ii with $q(\tau_0) = 2q_0$, then we set $u = K q$ during Subepoch 0.ii. As in the proofs of \eqref{eq: ep0 8} and \eqref{eq: ep0 9}, we use Remark \ref{rmk: cg}.\ref{rmk: cg a} and Lemma \ref{lem: nhl}.\ref{lem: nhl ai} to deduce that
\begin{equation}\label{eq: ep0 10}
    \E[\cost_{0.\ii}(b) \cdot \mathbbm{1}_{q(\tau_0) = 2q_0}] \le C |b|^{-1}\;\text{for any}\; |b| \ge a/10.
\end{equation}
Combining \eqref{eq: ep0 7}--\eqref{eq: ep0 10} with Lemma \ref{lem: j0} and \eqref{eq: ep0 6} proves \eqref{eq: ep0 11}.

\subsection{Probability estimates for Epoch $\mu$ (for $1 \le \mu \le \nu^*$)}\label{sec: nu}

We let $m_0, n_0$ be sufficiently large positive integers so that the conclusion of Lemma \ref{lem: BKPL} holds. Note that we can take $m_0, n_0$ to be absolute constants. We fix $m_0, n_0$ for the remainder of Section \ref{sec: def}.

Recall that we defined the event $\cE_{\nu,m}$ as
\[
\cE_{\nu,m} = \{ \cE_\nu \text{ AND } (\cE_{\mu.\ii}\;\text{for exactly }m\text{ integers } \mu \text{ with } 0 \le \mu < \nu)\}.
\]
The purpose of the present section is to prove the following estimates:
\begin{enumerate}[label={(P.\Roman*)}, ref={P.\Roman*}]
    \item\label{eq: nu 8} For any integers $\mu,m$ satisfying $2 \le \mu \le (\nu^*+1)$ and $0 \le m \le \mu$, we have
    \begin{equation*}
    \prob[\cE_{\mu,m}] \le C \exp(-c 4^{2m}e^\mu |b|)\;\text{for any}\; |b| \ge a e^{-\mu+n_0+1}.
    \end{equation*}
    \item\label{eq: nu 11.1} For any integers $\mu,m$ satisfying $2 \le \mu \le (\nu^*+1)$ and $2 \le m \le \mu$ we have
    \begin{equation*}
    \prob[\cE_{\mu,m}] \le C \exp(-c 4^{2m}a)\;\text{for any}\; |b| \le a e^{-\mu-m_0+1}.
    \end{equation*}
    \item\label{eq: nu 10} For any integers $\mu,m$ satisfying $1 \le \mu \le \nu^*$ and $0 \le m \le \mu$, we have
    \begin{equation*}
    \prob[\cE_{\mu.\ii}\;\text{AND}\; \cE_{\mu,m}] \le C \exp(-c 4^{2m} a)\;\text{for any}\; |b| \le a e^{-\mu-m_0}.
    \end{equation*}
\end{enumerate}

We first prove \eqref{eq: nu 10}. Let $1 \le \mu \le \nu^*$ and $0 \le m \le \mu$. Recall that if $t_\mu < T$, i.e., if Subepoch $\mu.\I$ occurs, then:
\begin{itemize}
\item Subepoch $\mu.\I$ lasts until time $\tau_\mu$, where $\tau_\mu$ is equal to the first time $t \in (t_\mu, \min\{T,(t_\mu+\tau)\})$ for which $|q(t) - q(t_\mu)| \ge |q(t_\mu)| (a\tau)^{1/2}$ if such a time exists and equal to $\min\{T,(t_\mu+\tau)\}$ if no such time exists. 
    \item We set $u = e^\mu q(t_\mu) (a\tau)^{-1/2}$ during Subepoch $\mu.\I$.
    \item If $\tau_\mu < \min\{T, (t_\mu +\tau)\}$, then at time $\tau_\mu$ we enter Subepoch $\mu.\ii$.
    \item If $\tau_\mu =  (t_\mu +\tau) < T$, then at time $\tau_\mu$ we enter Epoch $(\mu+1)$.
\end{itemize}
Therefore, Lemma \ref{lem: BKPL} implies that
\[
\E[\mathbbm{1}_{\cE_{\mu.\ii}} | \cF_{t_\mu}] \le C \exp(-c |q(t_\mu)|^2 a) \;\text{for any}\; |b| \le a e^{-\mu-m_0}.
\]
Recall (see \eqref{eq: q bound}) that if the event $\cE_{\mu,m}$ occurs, then we have $|q(t_\mu)| \ge c 4^m$ with probability 1. Since the event $\cE_{\mu,m}$ is $\cF_{t_\mu}$-measurable, we have
\begin{align*}
\E[\mathbbm{1}_{\cE_{\mu,m}} \mathbbm{1}_{\cE_{\mu.\ii}}] &= \E \big[ \mathbbm{1}_{\cE_{\mu,m}} \cdot \E[\mathbbm{1}_{\cE_{\mu.\ii}}|\cF_{t_\mu}]\big]\\
& \le C \E[\mathbbm{1}_{\cE_{\mu,m}} \exp(-c |q(t_\mu)|^2a)] \\
& \le C \exp(-c 4^{2m}a)\;\text{for any}\; |b| \le a e^{-\mu-m_0}.
\end{align*}
This proves inequality \eqref{eq: nu 10}.

We now prove \eqref{eq: nu 8}. Let $\mu,m$ be integers satisfying $2 \le \mu \le (\nu^*+1)$ and $0 \le m \le \mu$.  Observe that
\begin{equation}\label{eq: 3.22}
\begin{split}
\prob[\cE_{\mu,m}] \le& \E[\mathbbm{1}_{\cE_{(\mu-1),(m-1)}}\mathbbm{1}_{\cE_{\mu,m}}]\cdot \mathbbm{1}_{m>0} \\
&+ \E[\mathbbm{1}_{\cE_{(\mu-1).\ii}^c} \mathbbm{1}_{\cE_{(\mu-1),m}}\mathbbm{1}_{t_{\mu-1} < (T-\tau)}]\cdot \mathbbm{1}_{0 \le m \le (\mu-1)}.
\end{split}
\end{equation}

Assume that $0 \le m \le (\mu-1)$. By Lemma \ref{lem: BKPL}, we have
\begin{equation}\label{eq: n1}
\begin{split}
    \mathbbm{1}_{t_{\mu-1}<T-\tau}&\cdot\E[\mathbbm{1}_{\cE_{(\mu-1).\ii}^c} | \cF_{t_{\mu-1}}] \\
    &\qquad \le C \exp(- c |q(t_{\mu-1})|^2 b^2 e^{2\mu} / a)\;\text{for any}\; |b| \ge a e^{-\mu + n_0 + 1}.
\end{split}
\end{equation}
Since the events $\cE_{(\mu-1),m}$ and $t_{\mu-1}<(T-\tau)$ are $\cF_{t_{\mu-1}}$-measurable, and since $\cE_{(\mu-1),m}$ implies that $|q(t_{\mu-1})| > c 4^m$ with probability 1, we use \eqref{eq: n1} to get
\begin{equation}\label{eq: 3.23}
    \begin{split}
        \E[\mathbbm{1}_{\cE_{(\mu-1).\ii}^c}\mathbbm{1}_{\cE_{(\mu-1),m}}&\mathbbm{1}_{t_{\mu-1}<(T-\tau)}] \\
        = & \E\big[ \mathbbm{1}_{\cE_{(\mu-1),m}}\mathbbm{1}_{t_{\mu-1}<(T-\tau)}\cdot \E[\mathbbm{1}_{\cE_{(\mu-1).\ii}^c} | \cF_{t_{\mu-1}}] \big]\\
         \le & C \exp(-c 4^{2m}b^2 e^{2\mu}/a)\;\text{for any}\; |b| \ge a e^{-\mu + n_0 +1}.
    \end{split}
\end{equation}

Now assume that $0 < m \le \mu$. Recall that if $\tit_{\mu-1} < T$, i.e., if Subepoch $(\mu-1).\ii$ occurs, then
\begin{itemize}
    \item we set $u = K e^{\mu-1}q$ during Subepoch $(\mu-1).\ii$, and
    \item  Subepoch $(\mu-1).\iii$ occurs if and only if we encounter $|q| = 2 |q(\tilde{t}_{\mu-1})|$ during Subepoch $(\mu-1).\ii$.
\end{itemize}
Similarly, if $\hat{t}_{\mu-1} < T$, i.e., if Subepoch $(\mu-1).\iii$ occurs, then
\begin{itemize}
    \item we set $u =- K e^{\mu-1}q$ during Subepoch $(\mu-1).\iii$, and
    \item  Epoch $\mu$ occurs if and only if we encounter $|q| = 2 |q(\hat{t}_{\mu-1})|$ during Subepoch $(\mu-1).\iii$.
\end{itemize}
We can therefore use Lemma \ref{lem: nhl}.\ref{lem: nhl biii} to get
\begin{equation}\label{eq: n2}
\E[\mathbbm{1}_{\cE_{(\mu-1).\iii}} | \cF_{\tilde{t}_{\mu-1}}] \le C \exp(-c |q(\tilde{t}_{\mu-1})|^2 e^{\mu}|b|)\;\text{for any}\; b \le - a e^{-\mu + n_0 + 1}
\end{equation}
and
\begin{equation}\label{eq: n3}
\E[\mathbbm{1}_{\cE_\mu} |  \cF_{\hat{t}_{\mu-1}}]
\le C \exp(-c |q(\hat{t}_{\mu-1})|^2 e^\mu |b| )\;\text{for any}\; b \ge a e^{-\mu + n_0 +  1}.
\end{equation}
Since
\[
\mathbbm{1}_{\cE_{\mu,m}}\mathbbm{1}_{\cE_{(\mu-1),(m-1)}} = \mathbbm{1}_{\cE_\mu}\mathbbm{1}_{\cE_{(\mu-1).\iii}}\mathbbm{1}_{\cE_{(\mu-1).\ii}}\mathbbm{1}_{\cE_{(\mu-1),(m-1)}},
\]
we use \eqref{eq: n2}, \eqref{eq: n3}, and \eqref{eq: q bound} to deduce that
\begin{equation}\label{eq: 3.26}
    \E[\mathbbm{1}_{\cE_{\mu,m}} \mathbbm{1}_{\cE_{(\mu-1),(m-1)}}] \le C \exp(-c 4^{2m} e^\mu |b|) \;\text{for any}\; |b| \ge a e^{-\mu+n_0+1}.
\end{equation}
Combining \eqref{eq: 3.22}, \eqref{eq: 3.23}, \eqref{eq: 3.26} proves \eqref{eq: nu 8}.

We now prove \eqref{eq: nu 11.1}. We claim that for any $2\le \mu\le (\nu^*+1)$ and $2 \le m \le \mu$ we have
\begin{equation}\label{eq: nu 11}
    \prob[\cE_{\mu,m}] \le C (\mu-m+1) \exp(-c4^{2m}a)\;\text{for any}\; |b| \le a  e^{-\mu-m_0+1}.
\end{equation}
Since $\nu^* \le C a$, \eqref{eq: nu 11} implies \eqref{eq: nu 11.1}.

We fix $m$ with $2 \le m \le (\nu^*+1)$. Observe that for any $\mu$ with $m \le \mu \le (\nu^*+1)$ we have
\begin{equation}\label{eq: nu 9}
\begin{split}
    \prob[\cE_{\mu,m}] \le \prob[\cE_{(\mu-1).\ii}\;\text{AND}&\;\cE_{(\mu-1),(m-1)}]\\
    &+\prob[\cE_{(\mu-1),m}]\cdot \mathbbm{1}_{\mu \ge (m+1)}.
\end{split}
\end{equation}
We now show by induction that \eqref{eq: nu 11} holds for any such $\mu$.

\underline{Base Case}: Inequality \eqref{eq: nu 9} and \eqref{eq: nu 10} imply that
\[
\prob[\cE_{m,m}] \le C \exp(-c 4^{2m}a)\;\text{for any}\; |b| \le a  e^{-m-m_0+1}.
\]

\underline{Induction Step}: Suppose that
\begin{equation}\label{eq: nu 11.5}
\prob[\cE_{(\mu-1),m}] \le C (\mu-m) \exp(-c 4^{2m}a)\;\text{for any}\; |b| \le a  e^{-\mu-m_0+2}
\end{equation}
for some $\mu$ with $m < \mu \le (\nu^*+1)$. Combining \eqref{eq: nu 11.5} with \eqref{eq: nu 9} and \eqref{eq: nu 10} gives
\[
\prob[\cE_{\mu,m}] \le C(\mu-m+1) \exp(-c4^{2m}a)\;\text{for any}\; |b| \le a e^{-\mu-m_0+1}.
\]
This proves \eqref{eq: nu 11}.

\subsection{Expected cost during Epoch $\mu$ (for $1 \le \mu \le \nu^*$)}

Recall that if $t_\mu < T$, i.e., if Subepoch $\mu.\I$ occurs, then (with probability 1):
\begin{itemize}
    \item Subepoch $\mu.\I$ lasts for at most time $\tau$.
    \item During Subepoch $\mu.\I$ we have $|q| \le C |q(t_\mu)|$ and $|u| = e^\mu(a\tau)^{-1/2} |q(t_\mu)|$.
\end{itemize}
Taking $\tau < a^{-1}$ (recall that $\tau$ is chosen to be sufficiently small depending on $a$), we get
\begin{equation}\label{eq: nu 14}
    \E[\cost_{\mu.\I}(b) | \cF_{t_\mu}] \le Ce^{2\mu}|q(t_\mu)|^2 a^{-1} \;\text{for any}\; b \in \R.
\end{equation}

If $\tit_\mu < T$, i.e., if Subepoch $\mu.\ii$ occurs, then during Subepoch $\mu.\ii$ we set $u = K e^\mu q$ and so (with probability 1) we have $|q| \le 2 |q(\tilde{t}_\mu)|$ and $|u| \le C |q(\tilde{t}_\mu)|e^\mu$. Similarly, if $\htt_\mu <T$, then during Subepoch $\mu.\iii$ we set $u = -K e^\mu q$ and so (with probability 1) we have $|q| \le 2 | q(\htt_\mu)|$ and $|u| \le C |q(\htt_\mu)| e^\mu$. Therefore
\begin{align}
    &\E[\cost_{\mu.\ii}(b) | \cF_{\tilde{t}_\mu}] \le C e^{2\mu} |q(\tilde{t}_\mu)|^2\;\text{for any}\; b \in \R,\label{eq: nu 15} \\
    &\E[\cost_{\mu.\iii}(b) | \cF_{\hat{t}_\mu}] \le C e^{2\mu} |q(\hat{t}_\mu)|^2 \;\text{for any}\; b \in \R.\label{eq: nu 15.5}
\end{align}

It will be useful to have sharper versions of \eqref{eq: nu 15}, \eqref{eq: nu 15.5}.  We claim that for any $|b| \ge a e^{-\mu-m_0}$, we have
\begin{align}
& \E[\cost_{\mu.\ii}(b) | \cF_{\tilde{t}_\mu}] \le C e^{2\mu} a^{-1}|q(\tilde{t}_\mu)|^2,\label{eq: nu 25}\\
& \E[\cost_{\mu.\iii}(b) | \cF_{\hat{t}_\mu}] \le C e^{2\mu} a^{-1}|q(\hat{t}_\mu)|^2.\label{eq: nu 26}
\end{align}
We prove \eqref{eq: nu 25}; the proof of \eqref{eq: nu 26} is essentially identical.

We condition on the history up to time $\tilde{t}_\mu < T$ (if $\tilde{t}_\mu = T$, then $\cost_{\mu.\ii}=0$ and thus \eqref{eq: nu 25} is trivial ). Recall that Subepoch $\mu.\ii$ ends before time $T$ if and only if we encounter $|q| = 2 | q(\tilde{t}_\mu)|$. Provided $K$ is large enough depending on the constant $m_0$, we use Remark \ref{rmk: cg}.\ref{rmk: cg a} to establish \eqref{eq: nu 25} for $b \le -a e^{-\mu-m_0}$. Lemma \ref{lem: nhl}.\ref{lem: nhl ai} implies \eqref{eq: nu 25} for any $b \ge 0$ (and in particular for any $b \ge a e^{-\mu-m_0}$). This completes the proof of \eqref{eq: nu 25}.

\subsubsection{The Epochs in which our guess for $b$ is too small}

Assume that:
\begin{itemize}
    \item $\nu$ is an integer with $1 \le (\nu-m_0) \le \nu^*$,
    \item $|b| \le a e^{-\nu}$.
\end{itemize}

By \eqref{eq: nu 11.1}, for any $2 \le \mu \le (\nu-m_0)$ and $2 \le m \le \mu$ we have
\begin{equation}\label{eq: uc 2}
\prob[\cE_{\mu,m}] \le C \exp(-c 4^{2m}a).
\end{equation}
Combining this with \eqref{eq: nu 14} and \eqref{eq: q bound} gives
\[
\E[\cost_{\mu.\I}(b)\cdot \mathbbm{1}_{\cE_{\mu,m}}] \le C e^{2\mu}a^{-1} 4^{-2m};
\]
therefore
\begin{equation}\label{eq: uc 4}
\E[\cost_{\mu.\I}(b)] = \sum_{m=0}^\mu \E[\cost_{\mu.\I}(b) \cdot \mathbbm{1}_{\cE_{\mu,m}}]
 \le Ce^{2\mu} a^{-1}
\end{equation}
for any $2\le \mu \le (\nu-m_0)$. Observe that \eqref{eq: nu 14} also implies that
\begin{equation}\label{eq: uc 5}
    \E[\cost_{1.\I}(b)] \le C a^{-1};
\end{equation}
combining \eqref{eq: uc 4} and \eqref{eq: uc 5} gives
\begin{equation}\label{eq: uc 6}
\sum_{\mu=1}^{\nu-m_0 } \E[\cost_{\mu.\I}(b)] \le C e^{2\nu}a^{-1}.
\end{equation}

Now observe that, by \eqref{eq: nu 10}, for any $1 \le \mu \le (\nu-m_0)$ and $0 \le m \le \mu$ we have
\begin{equation}\label{eq: uc 3}    \prob[\cE_{\mu.\ii}\;\text{AND}\;\cE_{\mu,m}]\le C \exp(-c 4^{2m}a).
\end{equation}
Recall that $\cE_{\mu.\iii}$ occurs only if $\cE_{\mu.\ii}$ occurs. Therefore, combining \eqref{eq: nu 15}, \eqref{eq: nu 15.5}, \eqref{eq: q bound}, and \eqref{eq: uc 3} gives
\[
\E[(\cost_{\mu.\ii}(b)+\cost_{\mu.\iii}(b))\cdot \mathbbm{1}_{\{\cE_{\mu,m}\;\text{AND}\; \cE_{\mu.\ii}\}}] \le C e^{2\mu} a^{-1} 4^{-2m},
\]
and we deduce that
\[
\E[\cost_{\mu.\ii}(b)+
\cost_{\mu.\iii}(b)] 
    \le  Ce^{2\mu} a^{-1}
\]
for any $1 \le \mu \le (\nu-m_0)$. Summing over $\mu$, we get
\begin{equation}\label{eq: uc 7}
\sum_{\mu=1}^{\nu-m_0} \E[\cost_{\mu.\ii}(b)+\cost_{\mu.\iii}(b)] \le Ce^{2\nu } a^{-1}.
\end{equation}
Combining \eqref{eq: uc 6} and \eqref{eq: uc 7} gives
\begin{equation}\label{eq: uc 8}
\sum_{\mu=1}^{\nu-m_0} \E[\cost_\mu(b)]\le Ce^{2\nu}a^{-1}\;\text{for any}\;  |b| \le a e^{-\nu}.
\end{equation}

\subsubsection{The Epochs in which our guess for $b$ is too large}

Assume that:
\begin{itemize}
    \item $\nu$ is an integer with $m_0 \le \nu \le (\nu^*-n_0-2)$,
    \item $|b| \ge a e^{-\nu-1}$.
\end{itemize}

By \eqref{eq: nu 8}, for any $\mu$ satisfying
\[
(\nu+n_0+2) \le \mu \le \nu^*
\]
and for any $0 \le m \le \mu$ we have
\begin{equation}\label{eq: oc 2}
    \prob[\cE_{\mu,m}] \le C \exp(-c 4^{2m}e^{\mu}|b|).
\end{equation}
Combining \eqref{eq: oc 2} with \eqref{eq: nu 14}, \eqref{eq: nu 15}, \eqref{eq: nu 15.5}, and \eqref{eq: q bound} gives (for any such $\mu$)
\begin{equation*}
        \E[\cost_\mu(b)] = \sum_{m=0}^\mu \E[\cost_\mu(b)\cdot \mathbbm{1}_{\cE_{\mu,m}}] \le \frac{C}{e^\mu |b|^3}.
\end{equation*}
Summing over $\mu$, we get
\begin{equation}\label{eq: oc 2.5b}
    \sum_{\mu=(\nu+n_0+2)}^{\nu^*}\E[\cost_\mu(b)]\le \frac{C}{e^\nu |b|^3} \;\text{for any}\; |b|\ge a e^{-\nu-1};
\end{equation}
this implies
\begin{equation}\label{eq: oc 2.5}
    \sum_{\mu=(\nu+n_0+2)}^{\nu^*}\E[\cost_\mu(b)]\le \frac{Ce^{2\nu}}{a} \;\text{for any}\; |b|\ge a e^{-\nu-1}.
\end{equation}

\subsubsection{The Epochs in which our guess for $b$ is accurate}

Assume that:
\begin{itemize}
    \item $\nu$ is an integer with $(m_0+1) \le \nu < (\nu^*+m_0)$,
    \item $ae^{-\nu-1}\le |b| \le a e^{-\nu}$,
    \item $(\nu-m_0+1) \le \mu \le \min \{\nu^*, (\nu+n_0+1)\}$.
\end{itemize}

Inequalities \eqref{eq: nu 25} and \eqref{eq: nu 26} imply
\begin{align}
    & \E[\cost_{\mu.\ii}(b) | \cF_{\tilde{t}_\mu}] \le Ce^{2\mu} a^{-1} | q(\tilde{t}_\mu)|^2,\label{eq: nu 25b}&\\
    & \E[\cost_{\mu.\iii}(b) | \cF_{\hat{t}_\mu}] \le Ce^{2\mu} {a}^{-1} | q(\hat{t}_\mu)|^2.\label{eq: nu 26b}
\end{align}

Now observe that, by \eqref{eq: nu 11.1}, we have
\begin{equation}\label{eq: ac 1}
    \prob[\cE_\mu \;\text{AND}\; \cE_{(\nu-m_0+1),m}]\le\prob[\cE_{(\nu-m_0+1),m}] \le C \exp(-c 4^{2m}a)
\end{equation}
for any $2 \le m \le (\nu-m_0+1)$. Combining \eqref{eq: ac 1} and \eqref{eq: q bound} with \eqref{eq: nu 14}, \eqref{eq: nu 25b}, \eqref{eq: nu 26b} gives (respectively)
\[
\E[\cost_{\mu.j}(b)] \le C e^{2\mu}a^{-1}\;\text{for}\;j = \I, \ii,\iii.
\]
Summing over $\mu$ gives
\begin{equation}\label{eq: ac 3}
   \sum_{\mu = (\nu-m_0+1)}^{\min\{\nu^*,(\nu+n_0+1)\}} \E[\cost_\mu(b)]  \le C e^{2\nu}{a}^{-1} \;\text{for any}\; a e^{-\nu-1} \le |b| \le a e^{-\nu}.
\end{equation}

\subsubsection{Putting it all together}

Taking $\nu = (\nu^*+m_0)$ in \eqref{eq: uc 8} and using Lemma \ref{lem: j0} (recall that $\nu^*\le aT$), we deduce
\begin{equation}\label{eq: uc 10}
\sum_{\mu=1}^{\nu^*}\E[\cost_\mu(b)] \le C \cdot \ecost_\op(b) \;\text{for any}\;  |b| \le a e^{-\nu^*-m_0}.
\end{equation}
Setting $\nu = m_0$ in \eqref{eq: oc 2.5} gives
\begin{equation}\label{eq: oc 4}
    \sum_{\mu = m_0+n_0+2}^{\nu^*} \E[\cost_\mu(b)] \le Ca^{-1}\;\text{for any}\; |b| \ge a e^{-m_0-1}.
\end{equation}

By \eqref{eq: q bound}, we have the following for any $1 \le \mu \le (m_0 + n_0 + 1)$: If $t_\mu < T$ (resp. $\tit_\mu<T$, $\htt_\mu <T$), then $|q(t_\mu)| < C$ (resp. $|q(\tit_\mu)| < C$, $|q(\htt_\mu)| < C$) with probability 1. Therefore, \eqref{eq: nu 14}, \eqref{eq: nu 25}, and \eqref{eq: nu 26} imply that for any $|b| \ge ae^{-m_0-1}$ we have
\begin{equation}\label{eq: oc b2}
\E[\cost_\mu(b) | \cF_{t_\mu}] \le Ca^{-1}\;\text{for any}\; 1 \le \mu \le (m_0 + n_0+1).
\end{equation}
Therefore
\begin{equation}\label{eq: oc 5}
    \sum_{\mu=1}^{m_0+n_0+1} \E[\cost_{\mu}(b)] \le C a^{-1} \;\text{for any}\; |b| \ge a e^{-m_0-1}.
\end{equation}
In particular, \eqref{eq: oc 4}, \eqref{eq: oc 5} and Lemma \ref{lem: j0} give
\begin{equation}\label{eq: oc 6}
\sum_{\mu=1}^{\nu^*} \E[\cost_{\mu}(b)] \le C\cdot \ecost_\op(b) \;\text{for any}\; 100a \ge |b| \ge a e^{-m_0-1}.
\end{equation}

Recall inequality \eqref{eq: ep0 3}:
\[
\prob[\cE_1] \le C \exp(-c|b|) \;\text{for any}\; |b| \ge 100a.
\]
Combining this with \eqref{eq: oc 2.5b}, \eqref{eq: oc b2}, and Lemma \ref{lem: j0} gives
\begin{equation}\label{eq: oc 8}
\sum_{\mu=1}^{\nu^*} \E[\cost_{\mu}(b)] \le C\cdot \ecost_\op(b) \;\text{for any}\; |b| \ge 100a.
\end{equation}

For any $(\nu^* - n_0 - 1) \le \nu < (\nu^* + m_0)$, we use \eqref{eq: uc 8}, \eqref{eq: ac 3}, and Lemma \ref{lem: j0} to get
\begin{equation}\label{eq: oc b1}
    \sum_{\mu=1}^{\nu^*} \E[\cost_{\mu}(b)] \le C \cdot  \ecost_\op(b) \;\text{for any}\; a e^{-\nu-1}\le |b| \le a e^{-\nu}.
\end{equation}
Similarly, we use \eqref{eq: uc 8}, \eqref{eq: oc 2.5}, \eqref{eq: ac 3} and Lemma \ref{lem: j0} to deduce that \eqref{eq: oc b1} also holds for any $(m_0 + 1) \le \nu \le(\nu^*-n_0-2)$. We conclude that
\begin{equation}\label{eq: ac 4}
\sum_{\mu=1}^{\nu^*} \E[\cost_{\mu}(b)] \le C \cdot  \ecost_\op(b) \;\text{for any}\; a e^{-\nu^*-m_0} \le |b| \le a e^{-m_0-1}.
\end{equation}

Combining \eqref{eq: uc 10}, \eqref{eq: oc 6}, \eqref{eq: oc 8}, and \eqref{eq: ac 4} gives
\begin{equation}\label{eq: nu main}
    \sum_{\mu=1}^{\nu^*} \E[\cost_{\mu}(b)] \le C \cdot \ecost_\op(b)\;\text{for any}\; b \in \R.
\end{equation}

\subsection{Performance during the Final Epoch}

Recall that during Epoch $(\nu^*+1)$ we set $u=0$. Therefore, by Remark \ref{rmk: cg}.\ref{rmk: cg c}, we have
\begin{equation}\label{eq: star 1}
    \E[\cost_{\nu^*+1}(b) | \cF_{t_{(\nu^*+1)}}] \le \frac{e^{2aT}}{2a}[|q(t_{(\nu^*+1)})|^2+T].
\end{equation}
For any $2 \le m \le (\nu^*-m_0-n_0)$, \eqref{eq: nu 11.1} implies that
\begin{multline}\label{eq: star 2}
\prob[\cE_{\nu^*+1}\;\text{AND}\; \cE_{(\nu^*-m_0-n_0),m}]\\ \le C \exp(-c 4^{2m}a) \;\text{for any}\; |b| \le a  e^{-\nu^*+n_0+1}.
\end{multline}
Combining \eqref{eq: star 1}, \eqref{eq: star 2}, and \eqref{eq: q bound} gives
\begin{align*}
\E[\cost_{\nu^*+1}(b) \cdot \mathbbm{1}_{\cE_{(\nu^*-m_0 - n_0),m}}]
= & \E\big[ \mathbbm{1}_{\cE_{(\nu^*-m_0 - n_0),m}} \cdot \E[\cost_{\nu^*+1}(b) | \cF_{t_{(\nu^*+1)}}] \big]\\
\le & C e^{2aT} a^{-1} 4^{2m} \cdot\E[\mathbbm{1}_{\cE_{\nu^*+1}}\cdot \mathbbm{1}_{\cE_{(\nu^*-m_0 - n_0),m}}]\\
\le & C e^{2aT} a^{-1} 4^{-2m}\;\text{for any}\; |b| \le a e^{-\nu^*-n_0+1}.
\end{align*}
Observe that
\[
\mathbbm{1}_{\cE_{\nu^*+1}} = \sum_{m=0}^{\nu^*-m_0-n_0} \mathbbm{1}_{\cE_{\nu^*+1}}\cdot\mathbbm{1}_{\cE_{(\nu^*-m_0-n_0),m}};
\]
therefore (since $\cost_{\nu^*+1} = \mathbbm{1}_{\cE_{\nu^*+1}}\cdot \cost_{\nu^*+1}$)
\begin{align*}
    \E[\cost_{\nu^*+1}(b)] \le  C e^{2aT}a^{-1}\;\text{for any}\; |b| \le a  e^{-\nu^*+n_0+1}.
\end{align*}
Combining this with Lemma \ref{lem: j0} (recall that $\nu^* = \lfloor aT \rfloor$), we get
\begin{equation}\label{eq: star 3}
\E[\cost_{\nu^*+1}(b)] \le C \cdot \ecost_\op(b)\;\text{for any}\; |b| \le a  e^{-\nu^*+n_0+1}.
\end{equation}

Let $ -1 \le \nu \le (\nu^* - n_0 - 1)$ until further notice. Use \eqref{eq: nu 8} to get that
\begin{equation*}
    \prob[\cE_{(\nu^*+1),m}] \le C \exp(-c 4^{2m}e^{(\nu^*-\nu)} a )\;\text{for any}\; |b| \ge a e^{-\nu-1},
\end{equation*}
where $0 \le m \le (\nu^*+1)$. Combining this with \eqref{eq: star 1} and \eqref{eq: q bound} gives
\begin{align*}
    \E[\cost_{\nu^*+1}(b)\cdot \mathbbm{1}_{\cE_{(\nu^*+1),m}}]  \le C e^{2(aT-\nu^*)}e^{2\nu}a^{-1}4^{-2m} \;\text{for any}\;|b|\ge a e^{-\nu-1}.
\end{align*}
Recall that $\nu^* = \lfloor aT \rfloor$, so $e^{2(aT-\nu^*)} \le C$. We deduce that
\begin{equation}\label{eq: star 4}
    \E[\cost_{\nu^*+1}(b)] \le C e^{2\nu}a^{-1}\;\text{for any}\; |b| \ge a e^{-\nu-1};
\end{equation}
combining this with Lemma \ref{lem: j0} gives
\begin{equation}\label{eq: star 5}
\E[\cost_{\nu^*+1}(b)] \le C \cdot \ecost_\op(b)\;\text{for any}\;  a  e^{-\nu-1} \le |b| \le a  e^{-\nu},
\end{equation}
where $-1 \le \nu \le (\nu^*-n_0-1)$, and (setting $\nu=-1$ in \eqref{eq: star 4})
\begin{equation}\label{eq: star 7.5}
\E[\cost_{\nu^*+1}(b)]\le C \cdot \ecost_\op(b) \;\text{for any}\; a \le |b| \le 100 a .
\end{equation}

Since \eqref{eq: star 5} holds for any $-1 \le \nu \le (\nu^*-n_0-1)$, we combine it with \eqref{eq: star 3} to get
\begin{equation}\label{eq: star 6}
    \E[\cost_{\nu^*+1}(b)] \le C \cdot \ecost_\op(b)\;\text{for any}\; |b| \le a .
\end{equation}

By \eqref{eq: nu 8}, we have for any $0 \le m \le (\nu^*+1)$ that
\[
\prob[\cE_{(\nu^*+1),m}] \le C \exp(-c 4^{2m} e^{\nu^*} |b|)\;\text{for any}\; |b| \ge 100 a.
\]
Combining this with \eqref{eq: star 1} and \eqref{eq: q bound} gives
\begin{equation}
    \E[\cost_{\nu^*+1}(b)\cdot \mathbbm{1}_{\cE_{(\nu^*+1),m}}] \le C  4^{-2m}|b|\;\text{for any}\; |b| \ge 100 a;
\end{equation}
we conclude that
\[
\E[\cost_{\nu^*+1}(b)] \le C |b|^{-1}\;\text{for any}\; |b| \ge 100a.
\]
By Lemma \ref{lem: j0}, we have
\[
\E[\cost_{\nu^*+1}(b)] \le C \cdot \ecost_\op(b) \;\text{for any}\; |b| \ge 100a.
\]
Combining this with \eqref{eq: star 7.5} and \eqref{eq: star 6} gives
\begin{equation}\label{eq: star main}
    \E[\cost_{\nu^*+1}(b)] \le C \cdot \ecost_\op(b)\;\text{for any}\; b\in \R.
\end{equation}

Together, \eqref{eq: ep0 11}, \eqref{eq: nu main}, and \eqref{eq: star main} prove Theorem \ref{thm: main} for $a \ge A$.

\section{The strategy for bounded $a$}\label{sec: bdd}

Throughout Section \ref{sec: bdd} we assume that 
\[
|a| \le A.
\]
In a moment we will define the strategy $\br$ in this case. Throughout this section we write $q,u$ for the particle trajectories and control variables $q^\br, u^\br$.

We now define the strategy $\br$; it is divided into three Epochs, only the first of which is guaranteed to occur. At time 0 we enter Epoch 0.

\underline{Epoch $0$}: During Epoch $0$ we set $u(t) = -q(t).$ Epoch $0$ lasts from time $0$ until time $t_1$, where $t_1$ is equal to the first time $t \in(0,T)$ for which $|q(t)| = 2 q_0$ if such a time exists and is equal to $T$ if no such time exists. If $t_1 = T$, then at time $t_1$ the game ends. If $t_1 < T$, then at time $t_1$ we enter Epoch $1$.

\underline{Epoch $1$}: Suppose that we enter Epoch $1$, i.e., that $t_1 < T$. During Epoch $1$ we set $u(t) = q(t).$ Epoch $1$ lasts from time $t_1$ until time $t_2$, where $t_2$ is equal to the first time $t \in(t_1,T)$ for which $|q(t)| = 4q_0$ if such a time exists and is equal to $T$ if no such time exists. If $t_2 = T$, then at time $t_2$ the game ends. If $t_2 < T$, then at time $t_2$ we enter Epoch $2$.

Note that we have only defined the stopping time $t_2$ in the event that $(t_1 < T)$. For convenience, we define $t_2$ to be equal to $T$ in the event that $(t_1 = T)$.

\underline{Epoch 2}: Note that $t_2$ is equal to the time at which we enter Epoch 2 if such a time exists and is equal to $T$ if no such time exists. If we enter Epoch 2, then we set $u = 0$ until the game ends at time $T$.

This concludes the definition of the strategy $\br$ when $|a| \le A$.

As in Section \ref{sec: def}, we say that Epoch $\nu$ occurs if there exists some time at which we enter Epoch $\nu$ and we let $\cE_\nu$ denote the event that Epoch $\nu$ occurs. We define a random variable $\cost_\nu(b)$ (for $\nu=0,1,2$) to be equal to the cost incurred during Epoch $\nu$ if Epoch $\nu$ occurs and equal to 0 if Epoch $\nu$ does not occur. Observe that
\begin{equation*}
\ecost(\br,b)  = \sum_{\nu=0}^2 \E[\cost_\nu(b)].
\end{equation*}
Our goal in this section is to show that
\begin{equation}\label{eq: bdd main}
    \ecost(\br,b) \le C \cdot \ecost_\op(b)\;\text{for any}\; b \in \R;
\end{equation}
this implies Theorem \ref{thm: main} in the case $|a| \le A$.

 Since we set $u =0$ during Epoch 2, and since we assume that $|a|\le A$,  Remark \ref{rmk: cg}.\ref{rmk: cg b} gives
\begin{equation}\label{eq: bdd b1}
\E[ \cost_2(b) | \cF_{t_2}] \le C \;\text{for any}\; b \in \R.
\end{equation}
Remark \ref{rmk: cg}.\ref{rmk: cg b} also implies that
\begin{equation}\label{eq: bdd b2}
     \E[\cost_0(b)], \E[\cost_1(b)]  \le C \;\text{for any}\; |b| \le 100 A.
\end{equation}
Combining \eqref{eq: bdd b1}, \eqref{eq: bdd b2} with Lemma \ref{lem: j0} gives
\begin{equation}\label{eq: bdd b5}
\ecost(\br,b) \le C \cdot \ecost_\op(b)\; \text{for any} \; |b| \le 100A.
\end{equation}

Now, assume that $b \ge 100A$ and recall that we set $u = - q$ during Epoch 0. We have
\begin{flalign}
    &\E[\cost_0(b)] \le C |b|^{-1}\;\text{by Remark \ref{rmk: cg}.\ref{rmk: cg a}, and}\label{eq: bdd b3}&\\
    &\prob[\cE_2] \le \prob[\cE_1] \le C \exp(-c|b|)\text{ by Lemma \ref{lem: nhl}.\ref{lem: nhl biii}}.
\end{flalign}
Also, since we set $u = q$ during Epoch 1, and since $|q| \le C$ during Epoch 1, we have
\begin{equation*}
    \E[\cost_1(b)] \le C|b|^{-1} + C \cdot \E[\mathbbm{1}_{t_1 < T - 10|b|^{-1}}\cdot \E[\mathbbm{1}_{t_2 > t_1+ |b|^{-1}}|\cF_{t_1}]].
\end{equation*}
Our assumption $b\ge 100A$ implies that $|b|^{-1} \ge 99 |a+b|^{-1}/100$, therefore we can use Lemma \ref{lem: nhl}.\ref{lem: nhl ai} to deduce that
\begin{equation*}
    \E[\mathbbm{1}_{t_1 < T - 10|b|^{-1}}\cdot \E[\mathbbm{1}_{t_2 > t_1+|b|^{-1}}|\cF_{t_1}]] \le C |b|^{-1};
\end{equation*}
we conclude that
\begin{equation}
    \E[\cost_1(b)] \le C |b|^{-1}\;\text{for any}\; b \ge 100A.
\end{equation}

Similarly, if $b \le -100A$, then
\begin{flalign}
&\E[\cost_0(b)]  \le C|b|^{-1}\text{ by Lemma \ref{lem: nhl}.\ref{lem: nhl ai},}& \\
& \E[\cost_1(b)] \le C|b|^{-1} \;\text{by Remark \ref{rmk: cg}.\ref{rmk: cg a} and}, \\
& \prob[\cE_2] \le C \exp(-c|b|)\;\text{by Lemma \ref{lem: nhl}.\ref{lem: nhl biii}}.\label{eq: bdd b4}
\end{flalign}
Combining \eqref{eq: bdd b3}--\eqref{eq: bdd b4} with \eqref{eq: bdd b1} gives
\[
\ecost(\br,b) \le C|b|^{-1}\;\text{for any}\; |b| \ge 100A;
\]
we use Lemma \ref{lem: j0} to deduce that
\begin{equation}\label{eq: bdd b6}
\ecost(\br, b) \le C \cdot \ecost_\op(b)\;\text{for any}\; |b| \ge 100A.
\end{equation}
Combining \eqref{eq: bdd b5}, \eqref{eq: bdd b6} proves \eqref{eq: bdd main}.

 \section{The strategy for large, negative $a$}\label{sec: neg}

Throughout this section we assume that 
\[
a \le - A.
\]
In a moment, we will define the strategy $\br$ in this case. Throughout this section we write $q,u$ for the particle trajectories and control variables $q^\br, u^\br$.

We now define the strategy $\br$. It consists of three epochs, only the first two of which are guaranteed to occur.

\underline{Epoch 0}: Epoch 0 begins at time 0. During Epoch 0 we set $u = - q$. Epoch 0 lasts until time $t_1$, where $t_1$ is equal to the first time $t \in (0, (10|a|)^{-1})$ for which $q(t) = 2 q_0$ or $q(t) = \frac{1}{2} q_0$ if such a time exists and equal to $(10|a|)^{-1}$ if no such time exists. At time $t_1$ we enter Epoch 1. (Recall that $A$ was chosen so that $(10|a|)^{-1} < T$ for any $a \le -A$; see Section \ref{sec: def}.)

\underline{Epoch 1}: If we enter Epoch 1 at time $t_1 = (10|a|)^{-1}$, then we set $u=0$ until the game ends at time $T$.

If we enter Epoch 1 at time $t_1 < (10|a|)^{-1}$, then with probability 1 we have either $q(t_1) = 2q_0$ or $q(t_1) = \frac{1}{2} q_0$.

If $q(t_1) = 2q_0$, then during Epoch 1 we set $u=1000\cdot q$. If $q(t_1) = \frac{1}{2} q_0$, then during Epoch 1 we set $u = -1000\cdot q$. In either case Epoch 1 lasts until time $t_2$, where $t_2$ is equal to the first time $t \in (t_1, T)$ for which $|q(t)| = 4q_0$ if such a time exists and equal to $T$ if no such time exists. If $t_2 < T$, then at time $t_2$ we enter Epoch 2.

We have only defined the stopping time $t_2$ in the event that $(t_1 < T)$. For convenience, we define $t_2$ to be equal to $T$ in the event that $(t_1 = T)$.

\underline{Epoch 2}: Note that $t_2$ is equal to the time at which we enter Epoch 2 if such a time exists and is equal to $T$ if we never enter Epoch 2. If we enter Epoch 2, then we set $u=0$ until the game ends at time $T$. (Note that if $t_1 = (10|a|)^{-1}$, then $t_2 = T$ with probability 1.)

This concludes the definition of the strategy $\br$ in the case $a \le - A$.

As usual, we let $\cE_\nu$ denote the event that Epoch $\nu$ occurs (i.e., the event that there exists some time at which we enter Epoch $\nu$) and we define the random variable $\cost_\nu(b)$ to be equal to the cost incurred during Epoch $\nu$ if Epoch $\nu$ occurs and equal to 0 if Epoch $\nu$ does not occur. Clearly
\begin{equation*}
    \ecost(\br,b) = \sum_{\nu=0}^2 \E[\cost_\nu(b)].
\end{equation*}

We first note that for any $|b| \ge 100|a|$,
\[
\prob[t_1 > 10|b|^{-1}] \le \prob[t_1 > (10|b-a|/99)^{-1}].
\]
Since we set $u = -q$ during Epoch 0, we can use Parts \ref{lem: nhl ai} and \ref{lem: nhl bi} of Lemma \ref{lem: nhl} to get
\begin{equation}\label{eq: neg 5}
    \prob[t_1 > 10|b|^{-1}] \le C \exp(-c |b|)\;\text{for any}\; |b| \ge 100|a|.
\end{equation}

For any $|b| \le |a|/10$, we have that
\[
\prob[t_1 < (10|a|)^{-1}] \le \prob[t_1 < (11|b-a|^{-1}/100)].
\]
Taking $A$ to be sufficiently large to ensure that $(11|b-a|^{-1}/100) < T$ for any $|b| \le |a|/10$, Lemma \ref{lem: nhl}.\ref{lem: nhl c} gives
\begin{equation}\label{eq: neg 4}
    \prob[t_1 < (10|a|)^{-1}] \le C \exp(-c|a|)\;\text{for any }|b| \le |a|/10.
\end{equation}

Note that during Epoch 0 we have $|q|, |u| \le C$ with probability 1, and therefore
\begin{equation}
     \E[\cost_0] \le C |a|^{-1}\;\text{for any}\; b\in \R\label{eq: neg 5.5}
\end{equation}
simply because $t_1 \le (10 |a|)^{-1}$ with probability 1. Moreover,
\[
\E[\cost_0] \le C |b|^{-1} + C\cdot \prob[t_1 > 10|b|^{-1}]\;\text{for any}\; |b| \ge 100|a|,
\]
and therefore \eqref{eq: neg 5} implies that
\begin{equation}\label{eq: ln 4}
\E[\cost_0] \le C |b|^{-1}\;\text{for any}\; |b| \ge 100|a|.
\end{equation}
Combining \eqref{eq: neg 5.5} and \eqref{eq: ln 4} with Lemma \ref{lem: j0} gives
\begin{equation}\label{eq: ln 5}
\E[\cost_0] \le C \cdot \ecost_\op(b)\;\text{for any}\; b \in \R.
\end{equation}

Recall that if we enter Epoch 1 at position $q(t_1) = 2q_0$, then we set $u = 1000\cdot q$ during Epoch 1, and Epoch 1 ends before time $T$ if and only if we encounter $|q| = 4q_0$. By Remark \ref{rmk: cg}.\ref{rmk: cg a}, we have
\begin{equation}\label{eq: ln 6}
    \E[\cost_1(b)\cdot \mathbbm{1}_{q(t_1)=2q_0}] \le C|b|^{-1}\;\text{for any}\; b \le - \frac{|a|}{10}.
\end{equation}
Observe that for any $b \ge |a| / 10$ we have $b^{-1}\ge 100|a+1000\cdot b|^{-1}$. Therefore we can use Lemma \ref{lem: nhl}.\ref{lem: nhl ai} to get
\[
\E[\mathbbm{1}_{q(t_1)=2q_0}\cdot \mathbbm{1}_{t_1 < T - 10b^{-1}}\cdot \E[\mathbbm{1}_{t_2 > t_1+b^{-1}}|\cF_{t_1}]]\le C|b|^{-1}.
\]
Since
\begin{multline*}
\E[\cost_1(b)\cdot \mathbbm{1}_{q(t_1)=2q_0}] \\\le Cb^{-1} + C\E[\mathbbm{1}_{q(t_1)=2q_0}\cdot \mathbbm{1}_{t_1 < T - 10b^{-1}}\cdot \E[\mathbbm{1}_{t_2 >t_1+ b^{-1}}|\cF_{t_1}]],
\end{multline*}
we conclude that
\begin{equation}\label{eq: ln 7}
    \E[\cost_1(b)\cdot \mathbbm{1}_{q(t_1)=2q_0}] \le Cb^{-1}\;\text{for any}\; b \ge \frac{|a|}{10}.
\end{equation}

If we enter Epoch 1 at position $q(t_1) = q_0/2$, then we set $u = -1000q$ during Epoch 1, and Epoch 1 ends before time $T$ if and only if we encounter $|q|= 4q_0$. Proceeding essentially as in the proofs of \eqref{eq: ln 6} and \eqref{eq: ln 7}, we conclude from Remark \ref{rmk: cg}.\ref{rmk: cg a} and Lemma \ref{lem: nhl}.\ref{lem: nhl ai} that
\begin{equation}\label{eq: ln 8}
    \E[\cost_1(b)\cdot \mathbbm{1}_{q(t_1)=q_0/2}] \le C|b|^{-1}\;\text{for any}\; |b| \ge  \frac{|a|}{10}.
\end{equation}

In the event that we enter Epoch 1 at time $t_1 < (10|a|)^{-1}$, the random variable $\cost_1$ is bounded above by a constant with probability 1. We therefore use \eqref{eq: neg 4} to deduce that
\begin{equation}\label{eq: ln 9}
    \E[\cost_1(b) \cdot \mathbbm{1}_{t_1 < (10|a|)^{-1}}] \le C |a|^{-1}\;\text{for any}\; |b| \le \frac{|a|}{10}.
\end{equation}

Recall that if we enter Epoch 1 at time $t_1 =  (10|a|)^{-1}$, then we set $u=0$ until the end of the game at time $T$. By Remark \ref{rmk: cg}.\ref{rmk: cg a}, we then have
\begin{equation}
    \E[\cost_1(b) \cdot \mathbbm{1}_{t_1 = (10|a|)^{-1}} | \cF_{t_1}] \le C |a|^{-1}\;\text{for any}\; b \in\R. 
\end{equation}
In particular,
\begin{equation}\label{eq: ln 10}
    \E[\cost_1(b) \cdot \mathbbm{1}_{t_1 = (10|a|)^{-1}}] \le C |a|^{-1} \;\text{for any}\; |b| \le 100|a|.
\end{equation}
Using \eqref{eq: neg 5}, we see also that
\begin{equation}\label{eq: ln 11}
    \E[\cost_1(b) \cdot \mathbbm{1}_{t_1 = (10|a|)^{-1}}] \le C |b|^{-1} \;\text{for any}\; |b| \ge 100|a|.
\end{equation}
Combining \eqref{eq: ln 6}--\eqref{eq: ln 9}, \eqref{eq: ln 10}, and \eqref{eq: ln 11} with Lemma \ref{lem: j0} gives
\begin{equation}\label{eq: ln 12}
\E[\cost_1] \le C \cdot \ecost_\op(b)\;\text{for any}\; b \in \R.
\end{equation}

Assume that $b \ge 100|a|$. By Lemma \ref{lem: nhl}.\ref{lem: nhl biii}, we then have
\begin{align*}
&\prob[q(t_1) = 2q_0] \le C \exp(-cb), &\\
&\prob[(q(t_1) = q_0/2)\; \text{AND}\; (t_2<T)] \le C \exp(-cb).
\end{align*}
Therefore
\begin{equation}\label{eq: ln bb1}
    \prob[\cE_2] \le C \exp(-cb)\;\text{for any}\; b \ge 100|a|.
\end{equation}

Similarly, if $b \le -100|a|$, then by Lemma \ref{lem: nhl}.\ref{lem: nhl aii} we have
\begin{equation*}
    \prob[q(t_1) = q_0/2] \le C \exp(-c |b|),
\end{equation*}
and by Lemma \ref{lem: nhl}.\ref{lem: nhl biii} we have
\begin{equation*}
     \prob[(q(t_1) = 2q_0) \;\text{AND}\; (t_2 < T)]\le C \exp(-c|b|).
\end{equation*}
Therefore
\begin{equation}\label{eq: ln bb2}
\prob[\cE_2]\le C \exp(-c|b|)\;\text{for any}\; b \le -100|a|.
\end{equation}
If we enter Epoch 2, then we set $u=0$ for the remainder of the game. Therefore, by Remark \ref{rmk: cg}.\ref{rmk: cg a}, we have
\begin{equation}\label{eq: ln b1}
\E[\cost_2(b) | \cF_{t_2}] \le C |a|^{-1}\;\text{for any}\; b \in \R.
\end{equation}
In particular,
\begin{equation}\label{eq: ln 17}
\E[\cost_2 ] \le C |a|^{-1} \;\text{for any}\; |b| \le 100|a|.
\end{equation}
Combining \eqref{eq: ln b1} with \eqref{eq: ln bb1}, \eqref{eq: ln bb2} gives
\begin{equation}\label{eq: ln 18}
    \E[\cost_2 ] \le C |b|^{-1} \;\text{for any}\; |b| \ge 100|a|.
\end{equation}
Combining \eqref{eq: ln 5}, \eqref{eq: ln 12} with \eqref{eq: ln 17}, \eqref{eq: ln 18}, and Lemma \ref{lem: j0} gives
\[
\ecost(\br,b) \le C \cdot \ecost_\op(b)\;\text{for any}\in b \in \R.
\]
Thus, we have succeeded in proving Theorem \ref{thm: main} in the case $a \le - A$.

\bibliographystyle{plain}
\bibliography{ref}

\end{document}